\theoremstyle{plain} 
\newtheorem{proposition}{Proposition}[section] 
\newtheorem{theorem}[proposition]{Theorem} 
\newtheorem{corollary}[proposition]{Corollary} 
\theoremstyle{definition} 
\newtheorem{definition}[proposition]{Definition} 
\theoremstyle{remark} 
\newtheorem{remark}[proposition]{Remark}
\newcommand{\PP}{{\mathbb{P}}}  
\newcommand{\YY}{{\mathbb{Y}^5}}  
\newcommand{\MM}{{\mathcal M}_{GM}^4}
\newcommand{\MMo}{{\mathring{\mathcal M}}_{GM}^4}
\newcommand{\Vtre}{\mathcal{V}^{nod}_3}
\newcommand{\Vdue}{\mathcal{V}^{nod}_2}
\newcommand{\Vn}{\mathcal{V}^{nod}_n}
\newcommand{\C}{{\mathcal{C}}}
\numberwithin{equation}{section}
\title[Some Moduli of $n$-pointed Fano fourfolds]{Some Moduli of $n$-pointed Fano fourfolds}
\author[H. Awada]{Hanine Awada}
\address{Institut Montpellierain Alexander Grothendieck \\ %
Universit\'e de Montpellier \\ CNRS %
Case Courrier 051 - Place Eug\`ene Bataillon \\ %
34095 Montpellier Cedex 5 \\ %
France}
\email{hanine.awada@umontpellier.fr}
\author[M. Bolognesi]{Michele Bolognesi}
\address{Institut Montpellierain Alexander Grothendieck \\ %
Universit\'e de Montpellier \\ CNRS %
Case Courrier 051 - Place Eug\`ene Bataillon \\ %
34095 Montpellier Cedex 5 \\ %
France}
\email{michele.bolognesi@umontpellier.fr}
\author[G. Staglianò]{Giovanni Staglian\`o}
\address{Dipartimento di Matematica e Informatica  \\ %
Universit\`a degli Studi di Catania  \\ %
Viale A. Doria 5 - 95125 Catania  \\ %
Italy}
\email{giovanni.stagliano@unict.it}
\begin{document}

\begin{abstract}
The object of this note is the moduli spaces of cubic fourfolds (resp., Gushel-Mukai fourfolds) which contain some special rational surfaces. Under some 
hypotheses on the families of such surfaces, 
we develop a general method to show the unirationality of the moduli spaces of the $n$-pointed such fourfolds. We apply this to some codimension 1 loci of cubic fourfolds (resp., Gushel-Mukai fourfolds) appeared in the literature recently.
\end{abstract}

\maketitle 

\section{Introduction}
One of the most active areas of research in algebraic geometry is related to the study of the birational geometry of Fano varieties, notably those of dimension four. In the last 20 years,  algebraic geometers have been working on the problem of rationality of smooth cubic hypersurfaces in $\PP^5$(cubic fourfolds for short).
Recall that cubic fourfolds are parametrized by an open subset $\mathcal U$
in the $55$-dimensional projective space $\PP(\mathcal O_{\PP^5}(3))$. The moduli space 
of cubic fourfolds is the GIT quotient $\mathcal C = \mathcal U//\mathrm{PGL}_6$,
a quasi-projective variety of dimension $55-35=20$.
It is classically well-known that all cubic fourfolds are unirational and that some of them are rational. While the general suspicion is that most cubic fourfolds are non rational, no cubic fourfold has yet been proven to be non rational. 

Hassett, in his works \cite{Hassett,Has00} (see also \cite{Levico}), adopted a Hodge theoretic approach while studying cubic fourfolds.  He defined
the \textit{Noether-Lefschetz} locus as the subset of the moduli space $\mathcal C$ consisting of the \emph{special} cubic fourfolds, 
that is, fourfolds $X$ containing an algebraic surface $S$ that is not homologous to a complete intersection. 
One says that $X$ has discriminant $d$, which is defined as
 the determinant of the intersection form on the saturated sublattice of $H^{2,2}(X,\mathbb{Z})$ generated by $h^2$ and $[S]$,
where $h$ denotes the hyperplane section class of $X$.
Using the period map and the geometry of the period domain, Hassett proved that special cubic fourfolds form a countably infinite union of irreducible divisors $\C_d\subset \mathcal C$, corresponding to fourfolds having discriminant $d$, and where $d$ runs over all integers 
$d \geq 8$ with $d \equiv 0,2 \ (\mathrm{mod}\ 6)$. 
For small values of the discriminant $d$, these divisors are characterized by the
families of surfaces, not unique, that they contain (see \cite{Has00, Nuer, RS1, RS3}). Moreover, for an infinite values of $d$, cubic fourfolds in $\C_d$ are associated to a degree $d$ polarized K3 surface via Hodge theory. This seems to relate strongly to the rationality of cubic fourfolds. In fact, it is conjectured that fourfolds with an associated K3 surface should be precisely the rational ones (see \cite{kuz4fold,AT,kuz2,Levico,BRS,RS1,RS3}).

Another class of Fano fourfolds has emerged also: Gushel-Mukai fourfolds (GM fourfolds, for short), prime Fano fourfolds of degree 10 and index 2. By a result of Mukai \cite{mukai-biregularclassification}, they can be realized as smooth dimensionally transverse intersection of a cone $C(\mathbb{G}(1,4))\subset\PP^{10}$ 
 over the Grassmannian $\mathbb{G}(1,4)\subset\PP^9$ with a linear subspace $\mathbb{P}^8\subset\PP^{10}$ and a quadric hypersurface $Q\subset\PP^{10}$. 
The fourfolds for which the $\PP^8\subset\PP^{10}$ does not pass by the vertex of the cone
 are called \emph{ordinary}. These can be viewed as smooth quadric hypersurfaces in a smooth del Pezzo fivefold 
 $\YY=\mathbb{G}(1,4)\cap \PP^8\subset\PP^8$, thus parametrized 
 by an open subset $\mathcal V$ in the $39$-dimensional projective space $\mathbb{P}(\mathcal O_{\YY}(2))$;
 recall also that all such $\YY$
are projectively equivalent. The moduli space of GM fourfolds has dimension $24$
 and is denoted by $\MM$ (see \cite{DIM}). 
The ordinary GM fourfolds correspond to the points of an open subset $\MMo$ in $\MM$, 
 which  is the complementary of an irreducible closed subset of codimension $2$ in $\MM$.
 We can view $\MMo$ as the quotient $\mathcal V//\mathrm{PGL}_9$.

Under the point of view of birational geometry, GM fourfolds behave very much like cubic fourfolds and share many properties with them. They are again all unirational, rational examples are easy to construct, but no examples have yet been proven to be nonrational. Once again, by the study of the period map via Hodge theory, in \cite{DIM} the authors introduced the \textit{Noether-Lefschetz} locus inside the  moduli space $\MM$, 
 defined as the set of those fourfolds 
 containing a surface whose cohomology class does not come from the Grassmannian $\mathbb{G}(1,4)$.
This locus 
 consists of 
 a countable infinite union of divisors $(\MM)_d\subset\MM$, labelled 
 by the integers $d>8$ with $d\equiv0,2,4\ (\mathrm{mod}\ 8)$. The divisor 
 $(\MM)_d$ is  irreducible if $d\equiv 0,4 \ (\mathrm{mod} \ 8)$, 
 and it has two irreducible components $(\MM)_d'$ and $(\MM)_d^{''}$ if $d \equiv 2 \ (\mathrm{mod} \ 8)$; 
 see \cite{DIM,DK3}. 
 Recently, the third-named author  \cite{famGushelMukai} (see also \cite{HoffSta}), inspired by the work of Nuer \cite{Nuer}, gave an explicit description
 of the first irreducible components of this Noether-Lefschetz locus 
 in terms of classes of rational smooth surfaces  that the fourfolds have to contain.

Lately, the first and second named authors were interested in these objects, but under a slightly different point of view. In a fashion very similar to curves and K3 surfaces \cite{farkas2019unirationality, FarkasVerraC42, ma2019mukai, Barros_2018}, universal families were defined on cubic fourfolds \cite{awada2020unirationality}
. Since a generic cubic fourfold in any divisor $\C_d$ doesn't have projective automorphism,  universal cubic fourfolds $\C_{d,1} \rightarrow  \C_d$  were introduced over divisors for $8 \leq d \leq 42$. These universal cubic fourfolds $\C_{d,1}$ correspond to the moduli space of 1-pointed cubic fourfolds.
The authors prove the unirationality of $\C_{d,1}$ for $8 \leq d \leq 42$, using the presentation of the divisors $\C_d$ as cubics containing certain rational surfaces (see \cite{Nuer, RS3}). This, combined with a theorem of Koll\'ar \cite{kollar} on the unirationality of smooth cubic fourfolds over an arbitrary field, gives the result. Inductively, they prove the unirationality of $\C_{d,n}$ for the same range of values of $d$, for all $n$ \cite[Theorem 4.10]{awada2020unirationality}.


\medskip
In this paper we propose a unified, general, abstract method to show the unirationality of the $n$-pointed universal (cubic and GM) fourfolds over their moduli spaces. This can be applied to any family $\mathcal{X}_\mathcal{S}$ of (cubic and GM) fourfolds that contains surfaces from a given family $\mathcal{S}$, under some hypotheses (see Rmk. \ref{properties}) on $\mathcal{X}_\mathcal{S}$ and $\mathcal{S}$. 

\smallskip

\begin{theorem}
The universal $n$-pointed fourfold over the following irreducible codimension-one loci:
\[\C_{14},\ \C_{26},\ \C_{38},\ \C_{42},\ (\MM)_{10}',\ (\MM)_{10}^{''},\ (\MM)_{20},\]
are unirational.
\end{theorem}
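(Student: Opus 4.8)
The plan is to deduce all seven statements from one inductive argument resting on two external inputs: (i) the description of each divisor in the list as the closure of the locus of fourfolds containing a surface from an explicit family $\mathcal{S}$ of rational surfaces — the families of cubic scrolls, del Pezzo surfaces and the like of \cite{Nuer,RS3} in the cubic cases $\C_{14},\C_{26},\C_{38},\C_{42}$, and the families produced in \cite{famGushelMukai} in the cases $(\MM)_{10}',(\MM)_{10}^{''},(\MM)_{20}$ — each of these families carrying a rational parametrization defined over the base; and (ii) the unirationality over a field $k$ of a smooth fourfold of the relevant type that has a $k$-rational point, namely Koll\'ar's theorem \cite{kollar} for smooth cubic fourfolds and its Gushel--Mukai counterpart. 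For a divisor $\mathcal{D}$ in the list write $\mathcal{D}_n$ for the universal $n$-pointed fourfold over it, i.e.\ --- since the general member of $\mathcal{D}$ has no nontrivial automorphisms, cf.\ \cite{awada2020unirationality,DIM} --- the $n$-fold fibre product over $\mathcal{D}$ of the universal fourfold; the aim is that $\mathcal{D}_n$ is unirational for all $n$.

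For $n\ge 0$ let $T_n$ be a parameter variety for the tuples $(X,S,p_1,\dots,p_n)$ with $[X]\in\mathcal{D}$, with $S\subset X$ a surface of $\mathcal{S}$, and with $p_1,\dots,p_n\in X$. First I would observe two soft points. The forgetful morphism $T_n\to\mathcal{D}_n$ is dominant: over a point $(X,p_1,\dots,p_n)$ in a suitable dense open of $\mathcal{D}_n$ its fibre is the set of surfaces of $\mathcal{S}$ contained in $X$, which is nonempty by input (i). So it suffices to show each $T_n$ is unirational. And $T_0$ is unirational: it maps to the parameter space $\Sigma$ of surfaces of $\mathcal{S}$, which is unirational by hypothesis, with fibre over $[S]$ an open subset of the linear system of cubics, resp.\ of quadric sections of $\YY$, containing $S$ --- a projective space; and in each of the seven cases $\Sigma$ is in fact rational and this fibration is a projective bundle over a dense open, so $T_0$ is unirational, indeed rational. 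This is the base of the induction.

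Assume $T_n$ unirational. The map $T_{n+1}\to T_n$ forgetting $p_{n+1}$ has fibre $X$ over $(X,S,p_1,\dots,p_n)$, so $T_{n+1}$ is the total space of the universal fourfold $\mathcal{X}\to T_n$. Let $K=k(T_n)$ and let $X_K$ be the generic fourfold; it is smooth because the general member of $\mathcal{D}$ is, and it contains the generic surface $S_K$, which --- the surfaces of $\mathcal{S}$ being rationally parametrized over the base, cf.\ Rmk.\ \ref{properties} --- is a $K$-rational surface, so $X_K$ carries a $K$-rational point, necessarily smooth. By Koll\'ar's theorem \cite{kollar} in the cubic cases, and by the Gushel--Mukai analogue in the remaining ones, $X_K$ is unirational over $K$: there is a dominant rational map $\PP^4_K\to X_K$. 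Spreading it out over a dense open $U\subseteq T_n$ yields a dominant rational map $\PP^4\times U\to\mathcal{X}$ over $U$; as $U$ is unirational by the inductive hypothesis and $\PP^4$ is rational, $\PP^4\times U$ is unirational, hence so is $\mathcal{X}=T_{n+1}$. Together with the previous paragraph this proves the theorem.

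The work not carried out above is entirely in the two inputs, and this is where the hypotheses of Rmk.\ \ref{properties} must be verified case by case. Input (i) --- that each of the seven loci is swept out by fourfolds containing a surface from a rationally parametrized family of rational surfaces --- is supplied, with explicit $\mathcal{S}$, by \cite{Nuer,RS3} and \cite{famGushelMukai}. Input (ii) is Koll\'ar's theorem for cubic fourfolds, but for Gushel--Mukai fourfolds the unirationality over a non-closed field of the generic member of the divisor, given a rational point, is the delicate step: I would expect this to be the main obstacle, to be overcome either via the explicit rational realizations of these GM fourfolds in \cite{famGushelMukai}, or by exhibiting on them a quadric-surface- or conic-bundle structure over a rational base from which unirationality over $K$ follows, i.e.\ by a GM analogue of \cite{kollar}.
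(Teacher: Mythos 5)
Your inductive skeleton (parameter space of pairs $(S,X)$, then climb the tower of pointed families by working over the function field of the previous stage) matches the paper's, but your induction step is genuinely different from the one the paper uses, and in the Gushel--Mukai cases it has a real gap. You resolve the generic fibre $X_K$ over $K=k(T_n)$ by producing a $K$-point on it and invoking unirationality over $K$ \emph{from a rational point}: Koll\'ar's theorem \cite{kollar} for cubics, and an unnamed ``Gushel--Mukai analogue'' for $(\MM)_{10}'$, $(\MM)_{10}''$, $(\MM)_{20}$. For the four cubic loci this works and is essentially the argument of \cite{awada2020unirationality}, which this paper is explicitly trying to supersede. For the three GM loci, however, the analogue of Koll\'ar's theorem is not a known result you can cite, and you correctly identify it as ``the main obstacle'' --- but you then leave it unproved. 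As written, your proof establishes items (1)--(4) of the theorem and not items (5)--(7).

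The paper closes exactly this gap by using a stronger input than unirationality-from-a-point, namely hypothesis (\ref{ipotesi vera}) of Remark \ref{properties}: for each of the seven chosen descriptions there is an \emph{explicit birational} map $\psi_{(S,X)}:\PP^4\stackrel{\simeq}{\dashrightarrow}X$ defined over the field of definition of the pair $(S,X)$, furnished by the congruences of secant curves of Section \ref{famiglie di superfici} (e.g.\ the $3$-secant conics for $(\MM)_{20}$, the projection from $\langle S\rangle$ for $(\MM)_{10}'$, and for $(\MM)_{10}''$ the quintic del Pezzo fibration over $\PP^2$ combined with Enriques' rationality theorem \cite{Enr,EinSh}). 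Relativizing these linear systems makes each $\XX_{\cS,n+1}$ birational to a $\PP^4$-bundle over $\XX_{\cS,n}$, with no appeal to any Koll\'ar-type statement. This is also why the paper insists on these particular surface families: as its closing remark notes, describing $\C_{14}$ by quartic scrolls would only give a quadric bundle over the base, which is not automatically rational --- a warning that applies equally to your strategy of settling for fibrewise unirationality rather than an explicit birational model. To repair your proof you would either have to import the explicit birational parametrizations of the GM fourfolds from Section \ref{famiglie di superfici} (at which point you are running the paper's argument), or actually prove the GM analogue of Koll\'ar's theorem over a non-closed field, which is not done here or in the cited literature.
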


In the last section of the paper we restrict our attention to a codimension one locus $(\mathcal{M}^4_{GM})_{20}^{nod}$ inside $(\MM)_{20}$, defined via certain genus 11 K3 surfaces contained in a Noether-Lefschetz divisor. By describing the birational geometry of the NL divisor in the moduli of K3 surfaces, and exploiting the relation between these surfaces and the GM fourfolds, we prove that $(\mathcal{M}^4_{GM})_{20}^{nod}$ and the universal family $(\mathcal{M}^4_{GM})_{20,1}^{nod}$ above it are rational.

\bf Plan of the paper: \rm in Section \ref{famiglie di superfici}, we give explicit descriptions of certain divisors parametrizing (cubic or GM) fourfolds in their moduli space. We recall the constructions of several families $\mathcal{S}$ of surfaces characterizing these divisors and highlight some of their properties crucial for the next section. Section \ref{mainthm} is devoted to the proof of the main result of this paper. Finally, in Section \ref{g11} we describe the locus $(\mathcal{M}^4_{GM})_{20}^{nod} \subset (\MM)_{20}$ and show its rationality as well as that of the unviersal family $(\mathcal{M}^4_{GM})_{20,1}^{nod}$.

\section{Explicit geometric descriptions of some Nother-Lefschetz divisors in 
the moduli space $\mathcal C$ of cubic fourfolds 
and in the moduli space $\MM$ of GM fourfolds}\label{famiglie di superfici}

In this section, we shall recall some explicit descriptions 
of unirational irreducible families $\mathcal S$ in the Hilbert scheme of $\mathbb{P}^5$ 
(respectively in the Hilbert scheme of a fixed smooth del Pezzo fivevold $\YY=\mathbb{G}(1,4)\cap\mathbb{P}^8\subset\mathbb{P}^8$)
such that the closure of the locus of cubic fourfolds (resp., GM fourfolds) 
containing a surface of the family $\mathcal S$
describes a Noether-Lefschetz divisor in the corresponding moduli space. 
We shall focus on the fact that starting from a pair 
 $(S,X)$, where 
$S$ is a general member of the family $\mathcal S$ 
and $X$ is a general fourfold containing $S$, we can build an explicit birational map
$X\stackrel{\simeq}{\dashrightarrow} \PP^4$,
defined over the same field of definition as $S$ and $X$.

\subsection{Cubic fourfolds containing a quintic del Pezzo surface}\label{dp5}

A quintic del Pezzo surface is the image of 
$\PP^2$ via the linear system of cubic curves with 
 $4$ base points in general position.
 
 \begin{theorem}[\cite{Fano,BRS}]
  The cubic fourfolds containing a quintic del Pezzo surface 
describe the divisor 
$\mathcal C_{14}\subset \mathcal C$
of fourfolds of discriminant $14$.  
 \end{theorem}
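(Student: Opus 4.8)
The plan is to prove the two inclusions separately. For ``$\subseteq$'', I would fix a quintic del Pezzo surface $S\subset\PP^5$, i.e.\ the anticanonical image of the blow-up of $\PP^2$ at four general points, and record its invariants $H^2=\deg S=5$, $K_S^2=5$, $H\cdot K_S=-5$, $\chi(\mathcal O_S)=1$ and $e(S)=\chi_{\mathrm{top}}(S)=7$, where $H$ is the hyperplane class. Since $S$ is projectively normal with $h^0(\mathcal O_S(3))=h^0(-3K_S)=31$, one gets $h^0(\mathcal I_{S/\PP^5}(3))=56-31=25$, so the cubic fourfolds containing $S$ form a $\PP^{24}$, whose general member $X$ is smooth (by Bertini away from the base locus $S$, and a direct check along $S$). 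For such an $X$ I would compute the self-intersection of $S$ from $0\to T_S\to T_X|_S\to N_{S/X}\to 0$: using $c(T_X|_S)=(1+H)^6(1+3H)^{-1}=1+3H+6H^2$ and $c(T_S)=1-K_S+e(S)$,
\[
[S]^2_X=\deg c_2(N_{S/X})=6H^2+3\,H\cdot K_S+K_S^2-e(S)=30-15+5-7=13 .
\]
As $h^2\cdot h^2=3$ and $h^2\cdot[S]=5$, the lattice $\langle h^2,[S]\rangle\subset H^{2,2}(X,\mathbb{Z})$ has Gram matrix $\left(\begin{smallmatrix}3 & 5\\ 5 & 13\end{smallmatrix}\right)$, of determinant $14$; since $14$ is squarefree this lattice is automatically primitive in $H^4(X,\mathbb{Z})$, hence $X$ has discriminant $14$, that is $X\in\mathcal C_{14}$. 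The locus $\mathcal Z\subset\mathcal C$ of cubic fourfolds containing some quintic del Pezzo surface is irreducible — it is the image of the incidence variety $\{(S,X):S\subset X\}$, a $\PP^{24}$-bundle over the (irreducible, $\mathrm{PGL}_6$-homogeneous) Hilbert scheme of quintic del Pezzo surfaces in $\PP^5$ — and the computation applies to its general point, so $\mathcal Z\subseteq\mathcal C_{14}$.

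For ``$\supseteq$'' I would argue by dimensions. All quintic del Pezzo surfaces in $\PP^5$ are projectively equivalent and have finite automorphism group, so they form a $35$-dimensional $\mathrm{PGL}_6$-orbit; hence the incidence variety above has dimension $35+24=59$, and the universal family of pairs $(X,S)$, its quotient by $\mathrm{PGL}_6$, has dimension $24$. Therefore $\dim\mathcal Z=24-\delta$, where $\delta$ is the dimension of the family of quintic del Pezzo surfaces carried by a general $X\in\mathcal Z$. Since $\mathcal C_{14}$ is irreducible of dimension $19$ (Hassett, \cite{Has00}) and $\mathcal Z\subseteq\mathcal C_{14}$, it is enough to prove $\delta\le5$, i.e.\ that $\mathcal Z$ is dense in $\mathcal C_{14}$. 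Here I would invoke the classical analysis of the general member of $\mathcal C_{14}$ (Fano, \cite{Fano}, revisited in \cite{BRS}), which exhibits the expected $5$-dimensional family of quintic del Pezzo surfaces on such a cubic; alternatively one goes through the Beauville--Donagi description of $\mathcal C_{14}$ as the closure of the family of Pfaffian cubic fourfolds, each of which carries a quintic del Pezzo surface residual to one of its quartic rational normal scrolls. Either way $\overline{\mathcal Z}=\mathcal C_{14}$.

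The step I expect to be the main obstacle is this last one. Once the lattice computation is in place, the inclusion $\mathcal Z\subseteq\mathcal C_{14}$ and the bound $\dim\mathcal Z\le19$ are essentially formal; what requires real geometry — and is not visible from Hodge theory alone — is that a general cubic fourfold of discriminant $14$ actually contains a quintic del Pezzo surface. This is classical content, and the computational tools employed elsewhere in this paper (explicit smooth cubics through a quintic del Pezzo surface, realized over a finite field with a computer algebra system) give a convenient independent check on the discriminant computation above.
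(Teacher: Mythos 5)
The paper offers no proof of this statement---it is quoted from \cite{Fano} and \cite{BRS}---and your argument is exactly the standard one underlying those references: the invariants of the quintic del Pezzo surface, the count $h^0(\mathcal I_{S}(3))=56-31=25$, the self-intersection $[S]^2_X=6H^2+3H\cdot K_S+K_S^2-e(S)=13$, the Gram matrix of determinant $14$ together with the squarefree-discriminant saturation remark, and the dimension count $35+24-35=24$ for the quotient of the incidence variety are all correct. You also correctly isolate the one non-formal step, namely that the general member of $\mathcal C_{14}$ genuinely carries a ($5$-dimensional family of) quintic del Pezzo surfaces, and this is precisely the classical/Pfaffian content that the cited works of Fano and Bolognesi--Russo--Staglian\`o supply, so your proposal is a faithful reconstruction of the intended proof rather than a divergence from it.
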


 \begin{theorem}[\cite{Morin,Fano,BRS}]
 A quintic del Pezzo surface $S\subset\PP^5$
 admits a \emph{conguence of secant lines}: through the general point of $\PP^5$ there passes 
  a unique secant line to $S$.

   The general line of this congruence can be realized as the general 
 fiber of the dominant map 
  \[\mu:\PP^5\dashrightarrow \PP^4 \]
 defined by the linear system $|H^0(\mathcal I_{S}(2))|$ 
 of quadric hypersurfaces through~$S$.
 
If $X$ is a general cubic fourfold containing $S$, then the restriction 
of $\mu$ induces a birational map $\mu|_X:X\stackrel{\simeq}{\dashrightarrow}\PP^4$. 

\end{theorem}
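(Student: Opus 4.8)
The plan is to first establish the congruence of secant lines together with its identification with the fibration $\mu$ (these two facts being essentially classical, after Morin and Fano), and then to deduce the birationality of $\mu|_{X}$ for the general cubic $X\supset S$ by a residuation argument on secant lines.

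For the first two assertions, one begins by computing $h^{0}(\mathcal{I}_{S}(2))=5$: since the del Pezzo quintic is projectively normal, $H^{0}(\mathcal{O}_{\PP^{5}}(2))\to H^{0}(\mathcal{O}_{S}(2))$ is surjective and $h^{0}(\mathcal{O}_{S}(2))=\chi(-2K_{S})=16$ by Riemann--Roch on $S$ (recall $\mathcal{O}_{S}(1)=-K_{S}$). Moreover $S$ is scheme-theoretically cut out by this net of quadrics, so $\mu\colon\PP^{5}\dashrightarrow\PP^{4}$ has base locus exactly $S$ and lifts to a morphism $\widetilde{\mu}\colon\Bl_{S}\PP^{5}\to\PP^{4}$ given by $|2H-E|$. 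Now observe that $\mu$ contracts every secant line of $S$: if $\ell=\overline{pq}$ with $p,q\in S$, then each quadric through $S$ restricts on $\ell\cong\PP^{1}$ to a section of $\mathcal{O}_{\PP^{1}}(2)$ vanishing at $p$ and $q$, hence to the divisor $p+q$ up to a scalar, so $\mu$ is constant on $\ell$ (and likewise on the limiting tangent lines). On the other hand a standard intersection computation on $\Bl_{S}\PP^{5}$, using $\deg S=5$ and the Chern classes of $N_{S/\PP^{5}}$, gives $(2H-E)^{4}\cdot H=1$ and $(2H-E)^{4}\cdot E=2$; equivalently, by the double point formula the generic projection $\pi_{x}\colon S\to\PP^{4}$ has a double-point $0$-cycle of degree $2$, the classical ``one apparent double point'' property of the del Pezzo quintic. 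It follows that $\widetilde{\mu}$ is dominant and its general fibre is a line meeting $S$ in a length-$2$ subscheme, i.e.\ a secant line; combined with the contraction observation, the general point of $\PP^{5}$ lies on exactly one secant line of $S$, namely the fibre of $\mu$ through it. This is the congruence, realised by $\mu$.

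For the last assertion, let $X\subset\PP^{5}$ be a general cubic fourfold containing $S$; such an $X$ is smooth (these cubics fill the divisor $\C_{14}$, by the theorem above) and the cubics through $S$ form a $\PP^{24}$ (projective normality again). For every secant line $\ell$ of $S$ with $\ell\cap S=\{p,q\}$, the cubics containing $S\cup\ell$ have codimension exactly $2$ in those containing $S$ (already the cubics $Q\cdot L$, with $Q\in|\mathcal{I}_{S}(2)|$ and $L$ a linear form, surject onto the $2$-dimensional space $H^{0}(\mathcal{O}_{\ell}(3)(-p-q))$), so the secant lines contained in $X$ form a family of dimension at most $2$ and do not cover $X$. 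Hence for a general $x\in X$ the unique secant line $\ell_{x}$ through $x$ is not contained in $X$, so $\ell_{x}\cap X$ is a length-$3$ scheme two of whose points lie on $S=\ell_{x}\cap S$ (which is in the indeterminacy locus of $\mu$), the third being $x$ itself. Therefore the fibre of $\mu|_{X}\colon X\dashrightarrow\PP^{4}$ over $\mu(x)$ is the single reduced point $x$, so $\mu|_{X}$ is generically injective; being a generically finite rational map from an irreducible fourfold it is also dominant onto $\PP^{4}$, and in characteristic zero a dominant, generically injective rational map between varieties of the same dimension is birational. Finally $\mu$, hence $\mu|_{X}$, is given by $|H^{0}(\mathcal{I}_{S}(2))|$, so it is defined over the common field of definition of $S$ and $X$.

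The crux is the classical ``one apparent double point'' count, which simultaneously pins down the dimension and the degree of the general fibre of $\mu$; granting that, the passage to a general cubic $X\supset S$ is the elementary residuation above, the only genuine care being needed in the genericity statements for $X$ (smoothness, and the fact that the general secant line of $S$ is not contained in $X$).
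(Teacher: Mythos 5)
The paper does not prove this statement: it is quoted verbatim as a classical result with references to Morin, Fano and \cite{BRS}, so there is no internal proof to compare against. Your argument is, in substance, the standard one from those references, and it is essentially correct: the count $h^0(\mathcal I_S(2))=21-16=5$ (note this is a four-dimensional linear system, not a ``net''), the fact that quadrics through $S$ cut out $S$ and contract every secant line, the double-point/intersection computation $(2H-E)^4\cdot H=1$, $(2H-E)^4\cdot E=2$ establishing the OADP property, and the residuation $\ell_x\cap X=\{p,q,x\}$ are all the right ingredients. The one step you should tighten is the final deduction of birationality: you conclude that the fibre of $\mu|_X$ over $\mu(x)$ is $\{x\}$ for general $x\in X$, but identifying that fibre with $(\ell_x\setminus\{p,q\})\cap X$ presupposes that $\mu(x)$ lies in the open set of $\PP^4$ over which the fibre of $\mu$ is a single secant line, i.e.\ that $X$ is not contracted by $\mu$ into the bad locus --- which is what dominance of $\mu|_X$ would give, so the argument as written is mildly circular. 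The cleanest fix is already implicit in your numbers: the strict transform of $X$ in $\Bl_S\PP^5$ has class $3H-E$, so the general fibre of $\widetilde\mu$ meets it in $(2H-E)^4\cdot(3H-E)=3\cdot 1-2=1$ point, which gives dominance and degree one simultaneously. (Alternatively, observe that $\mu^{-1}(\PP^4\setminus V)$ is a fixed proper closed subset depending only on $S$, so the general member of the $24$-dimensional linear system of cubics through $S$ is not contained in it.) With that adjustment your proof is complete and matches the approach of the cited sources.
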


\subsection{Cubic fourfolds containing a $3$-nodal septic scroll}\label{7scroll}
 Let $S\subset\PP^5$ be the projection 
 of a rational  normal septic scroll $\Sigma_7\subset\PP^8$ 
 from a plane spanned by three general points on the secant variety of $\Sigma_7$. 
 Thus $S$ is a rational septic scroll having $3$ non-normal nodes.
 \begin{theorem}[\cite{FV}]
  The family of rational $3$-nodal septic scrolls, constructed as above,
  is irreducible, unirational and of dimension $44=74+5\cdot 3 - \dim\mathrm{PGL}_9 + \dim\mathrm{PGL}_6$.
  
  The cubic fourfolds containing such a surface 
describe the divisor $\mathcal C_{26}\subset \mathcal C$ of fourfolds of discriminant $26$.
 \end{theorem}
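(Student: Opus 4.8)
The plan is to establish the two assertions separately. For the claim about the family $\mathcal S$ of $3$-nodal septic scrolls, the idea is an explicit parametrization. The rational normal septic scrolls $\Sigma_7\subset\PP^8$ (of balanced type $S(3,4)$) form an irreducible unirational family of dimension $74=\dim\mathrm{PGL}_9-\dim\mathrm{Aut}(\Sigma_7)$ — for instance one may prescribe two complementary linear subspaces $\PP^3,\PP^4\subset\PP^8$, a rational normal curve in each, and an isomorphism between them. The secant fivefold $\mathrm{Sec}(\Sigma_7)$ is unirational as well, so three general points on it contribute $3\cdot 5=15$ parameters, and projecting $\Sigma_7$ from the plane $\Lambda$ they span yields a septic scroll $S\subset\PP^5$ with exactly three non-normal nodes. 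Thus (the closure of) $\mathcal S$ is the image of an irreducible unirational variety, hence irreducible and unirational. For the dimension I would recover the pair $(\Sigma_7\subset\PP^8,\Lambda)$ from $S$ up to $\mathrm{PGL}_9$: the normalization $\nu\colon\widetilde S\to S$ is intrinsic with $\widetilde S\cong\Sigma_7$, the complete linear system $|\nu^{*}\mathcal O_S(1)|$ reconstructs the linearly normal $\Sigma_7\subset\PP^8$, and $\Lambda$ is the centre of the induced projection to $\PP^5=\PP(H^0(\mathcal O_S(1))^{\vee})$, the three node conditions cutting a codimension-$3$ subspace of $H^0(\nu^{*}\mathcal O_S(1))$. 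Therefore $\mathcal S/\mathrm{PGL}_6$ is birational to $\{(\Sigma_7\subset\PP^8,\Lambda)\}/\mathrm{PGL}_9$, of dimension $(74+15)-80=9$, so $\dim\mathcal S=9+35=44$, which is the stated expression.

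For the divisorial statement I would run the standard incidence-variety count. Riemann--Roch on $\widetilde S\cong\mathbb F_1$ (with $\widetilde H^{2}=7$ and $\widetilde H\cdot K_{\widetilde S}=-9$), corrected by the three nodes, gives $h^{0}(\mathcal O_S(3))=43$; checking that $S$ is $3$-normal then yields $h^{0}(\mathcal I_{S/\PP^5}(3))=13$, so the cubics through a general $S\in\mathcal S$ form a $\PP^{12}$, and one verifies that the general such cubic is smooth. The variety of pairs $(S,X)$ with $X\supset S$ a smooth cubic is irreducible of dimension $44+12=56$ and dominates the locus $\mathcal Z\subseteq\mathcal C$ of cubic fourfolds containing a $3$-nodal septic scroll. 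Granting $h^{0}(N_{S/X})=2$ and $H^{1}(N_{S/X})=0$ for general $(S,X)$ — that is, that a general member of $\mathcal Z$ contains a two-dimensional family of such scrolls — one obtains $\dim\mathcal Z=56-35-2=19$. It remains to compute the discriminant: for general $X\in\mathcal Z$ the lattice $H^{2,2}(X,\mathbb Z)$ has rank two, generated by $h^{2}$ and $[S]$, with $h^{2}\cdot h^{2}=3$, $h^{2}\cdot[S]=\deg S=7$ and $[S]^{2}=25$, the last value obtained from Hassett's self-intersection formula for surfaces in a cubic fourfold applied to the normalization and corrected for the three nodes (equivalently, computed on the blow-up of $X$ along the nodes, where the strict transform $\widetilde S$ is smooth). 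Since $\det\!\left(\begin{smallmatrix}3&7\\7&25\end{smallmatrix}\right)=26$ is squarefree, this lattice is already primitive and $X$ has discriminant $26$; as $\mathcal C_{26}$ is irreducible of dimension $19$, the dense subvariety $\mathcal Z$ must fill it, so $\overline{\mathcal Z}=\mathcal C_{26}$.

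The points I expect to be delicate, and which I would settle by an explicit computation on a single scroll defined over $\mathbb Q$ or a finite field, are: the $3$-normality of $S$ and the smoothness of the general cubic containing it; the vanishing $H^{1}(N_{S/X})=0$ together with $h^{0}(N_{S/X})=2$, without which the codimension of $\mathcal Z$ could be larger than expected; and, above all, the self-intersection number $[S]^{2}=25$, where the non-normal nodes of $S$ force one to abandon the smooth-surface self-intersection formula and instead work with the equisingular normal sheaf or pass to a resolution, keeping careful track of the three exceptional $\PP^{3}$'s. This intersection-theoretic input on the singular surface is, to my mind, the real crux; once it is secured, the irreducibility of $\mathcal C_{26}$ supplies the rest.
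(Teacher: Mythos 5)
The paper does not prove this statement: it is quoted verbatim from Farkas--Verra \cite{FV}, so there is no internal proof to compare against. Your reconstruction is, however, a faithful outline of the argument in the cited source, and the numerology is all correct: $74=\dim\mathrm{PGL}_9-\dim\mathrm{Aut}(S(3,4))$ for the linearly normal scrolls, $15$ parameters for the three points on the $5$-dimensional secant variety, giving the $44$ of the statement; $h^0(\mathcal I_S(3))=13$ (via $\chi(3\widetilde H)=46$ on $\mathbb F_1$ minus the three node conditions), hence the $56$-dimensional incidence variety; and $[S]^2=19+2\cdot 3=25$, so $\det\bigl(\begin{smallmatrix}3&7\\7&25\end{smallmatrix}\bigr)=26$, squarefree, whence saturation and $\mathcal Z\subseteq\C_{26}$. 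The three points you flag as delicate are exactly the ones that cannot be obtained by pure numerology and that \cite{FV} settles by explicit analysis (partly computational): the $3$-normality of $S$ and smoothness of a general cubic through it, the bound $h^0(N_{S/X})=2$ needed to force $\dim\mathcal Z=19$ rather than merely $\dim\mathcal Z\le 19$, and the intersection-theoretic computation of $[S]^2$ on a resolution of the non-normal nodes. One small simplification you could have invoked: since $\mathcal Z$ is irreducible and lies in the countable union $\bigcup_d\C_d$, it lies in a single $\C_d$, and the lattice computation pins down $d=26$ without any dimension count; the normal-sheaf computation is then needed only for the surjectivity $\overline{\mathcal Z}=\C_{26}$. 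As a blind reconstruction of the cited result, your proposal is sound and honestly delimits what remains to be verified.
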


 \begin{theorem}[\cite{RS3,procRS}]
  Let $S\subset\PP^5$ be a general rational $3$-nodal septic scroll.
   Then $S$
  admits a \emph{conguence of $5$-secant conics}: through the general point of $\PP^5$ there passes 
  a unique conic curve which is $5$-secant to $S$.

  The general conic curve of this congruence can be realized as the general 
 fiber of the dominant map 
  \[\mu:\PP^5\dashrightarrow \PP^4 \]
 defined by the linear system $|H^0(\mathcal I_{S}^{2}(5))|$ 
 of quintic hypersurfaces with double points along~$S$.
 
 If $X$ is a general cubic fourfold containing $S$, then the restriction 
of $\mu$ induces a birational map $\mu|_X:X\stackrel{\simeq}{\dashrightarrow}\PP^4$.
 \end{theorem}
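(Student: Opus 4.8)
The plan is to reduce the whole statement to a study of the linear system $\mathcal{L}=|H^0(\mathcal I_S^2(5))|$ of quintics singular along $S$. I would first establish two quantitative facts: (i) $h^0(\PP^5,\mathcal I_S^2(5))=5$, so that $\mu$ is a rational map to $\PP^4$; and (ii) $\mu$ is dominant, with general fibre an irreducible conic meeting $S$ in five points. Granting (i) and (ii), the congruence statement becomes formal, and the birationality of $\mu|_X$ follows from the residual-intersection mechanism standard for congruences of secant curves (cf.\ \cite{RS3}).

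To obtain (i) and (ii) I would exploit that $S$ is the projection of the rational normal septic scroll $\Sigma_7\subset\PP^8$ from a plane meeting $\operatorname{Sec}(\Sigma_7)$ in three points: this makes the homogeneous ideal of $S$, hence that of the first infinitesimal neighbourhood $2S$, explicit — via the Eagon--Northcott resolution of $\mathcal I_{\Sigma_7}$ together with the three projection points — so that $h^0(\mathcal I_S^2(5))=5$ reduces to a finite linear-algebra check (carried out with computer algebra in the cited references). The shape of the general fibre then follows from a base-locus analysis of $\mathcal{L}$, using the soft observation that \emph{every} conic $C$ which is $5$-secant to $S$ is contracted by $\mu$: for any quintic $Q$ singular along $S$ and not containing $C$, the divisor $Q|_C$ on $C\cong\PP^1$ has degree $10$ and vanishes to order at least two at each of the five points of $C\cap S$, hence equals $2(C\cap S)$ independently of $Q$; so the forms defining $\mu$ restrict to proportional divisors on $C$, and $\mu$ is constant there. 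I expect controlling the system of quintics double along the \emph{singular} surface $S$ to be the genuine obstacle; everything after (i) and (ii) is formal.

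Indeed, since $\mu$ is dominant onto $\PP^4$ and contracts every $5$-secant conic, its general fibre is a one-dimensional curve of degree $2$, which must be the $5$-secant conic through any of its general points; and this conic is unique through a general point of $\PP^5$, for two distinct $5$-secant conics through one point would produce a two-dimensional fibre of $\mu$. This is the asserted congruence, with its general member realized as the general fibre of $\mu$.

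Finally, let $X$ be a general cubic fourfold containing $S$, and let $C_p$ denote the congruence conic through a general point $p$. For general such $X$ and $p$ one has $C_p\not\subseteq X$: this follows from a dimension count, using that the fixed conic $C_p$ imposes on the cubics through $S$ the expected number of conditions, so that the locus of pairs $(X,p)$ with $p\in X$, $X\supseteq S$ and $C_p\subseteq X$ cannot carry a $4$-dimensional family in the fibre direction over $|\mathcal I_S(3)|$ (alternatively one exhibits a single such $X$). Then $C_p\cap X$ is a length-$6$ scheme containing the length-$5$ scheme $C_p\cap S$, leaving one residual point, equal to $p$. Hence $\mu|_X$ is dominant onto $\PP^4$ — a general $q\in\PP^4$ is the image of the residual point of $C_q\cap X$ — and generically injective, since $\mu|_X(p)=\mu|_X(p')$ forces $p$ and $p'$ onto the same congruence conic as its unique residual point relative to $X$, so $p=p'$. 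Therefore $\mu|_X\colon X\dashrightarrow\PP^4$ is birational, exactly as in the quintic del Pezzo case of Subsection~\ref{dp5}.
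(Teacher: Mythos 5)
The paper does not prove this statement: it is quoted verbatim as a theorem of Russo--Staglian\`o \cite{RS3,procRS}, so there is no internal proof to compare against. Measured against the cited sources, your outline is the right strategy and essentially reproduces theirs: compute $h^0(\mathcal I_S^2(5))=5$ for an explicit $S$ (a finite computation, propagated to the general member by semicontinuity over the irreducible family), analyse the base locus of $\mu$ to identify the general fibre, use the degree count $2\cdot 5=10$ on a conic to see that $5$-secant conics are contracted, and finish with the residual-point argument ($\deg(C_p\cap X)=6$, length $5$ absorbed by $C_p\cap S$, one residual point) to get birationality of $\mu|_X$. That last paragraph is correct and is exactly the standard mechanism.

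One step deserves a sharper statement, because as written it is circular. In your third paragraph you infer that the general fibre of $\mu$ ``is a one-dimensional curve of degree $2$'' from dominance plus the fact that $\mu$ contracts every $5$-secant conic. That inference is not valid: the contraction argument gives only \emph{uniqueness} (two distinct $5$-secant conics through a general point would force a fibre of dimension $\geq 2$, or a reducible degree-$4$ fibre), not \emph{existence}, and it says nothing about the degree of the fibre. The substantive geometric input --- that the closure of the general fibre really is an irreducible conic meeting $S$ in a length-$5$ scheme, rather than a curve of some other degree or a curve meeting $S$ differently --- must come from the base-locus/projective-degree computation you gesture at in paragraph two (e.g.\ that four general members of $|H^0(\mathcal I_S^2(5))|$ meet residually to $S$ in a degree-$2$ curve, which is how \cite{RS1,RS3} actually proceed, again by explicit computation for one $S$ plus semicontinuity). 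If you promote that computation from a parenthetical to the stated pivot of the argument, and derive $5$-secancy of the fibre from $2k\leq 10$ together with the identification of the base locus with $2S$, the proof is complete and faithful to the cited references.
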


\subsection{Cubic fourfolds containing a ``generalized'' Coble surface}\label{coble}
Let $S\subset\PP^5$ be the image of $\PP^2$ via the linear system of curves of degree $10$ with 
 $10$ general triple points. We have that $S$ is a smooth rational surface of degree $10$ 
 and sectional genus $6$ cut out by $10$ cubics. 
\begin{theorem}[\cite{Nuer}]
The surfaces $S\subset \mathbb{P}^5$ 
obtained as above
form an irreducible unirational 
family $\mathcal S^{10,6}\subset\mathrm{Hilb}_{\PP^5}$ of dimension $47 = 10\cdot 2 - \dim \mathrm{PGL}_3 + \dim \mathrm{PGL}_6$.

The cubic fourfolds containing a surface of the family $\mathcal S^{10,6}$ 
describe the divisor 
$\mathcal C_{38}\subset \mathcal C$ of fourfolds  of discriminant $38$.
\end{theorem}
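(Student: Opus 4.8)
I would prove the two assertions separately. For the first --- that $\mathcal{S}^{10,6}$ is irreducible, unirational and of dimension $47$ --- the idea is to parametrize the construction. Let $U\subset(\mathbb{P}^2)^{10}$ be the open locus of $10$-tuples $p_\bullet=(p_1,\dots,p_{10})$ in general position, and for $p_\bullet\in U$ let $\phi_{p_\bullet}$ be the rational map attached to $|10L-3\sum_i E_i|$ on $\mathrm{Bl}_{p_\bullet}\mathbb{P}^2$. One first checks --- a standard but not entirely routine point, carried out in \cite{Nuer} --- that for general $p_\bullet$ this linear system has projective dimension $5$ and is very ample, so that $S_{p_\bullet}:=\phi_{p_\bullet}(\mathbb{P}^2)\subset\mathbb{P}^5$ is a smooth rational surface; writing $S_{p_\bullet}\cong\mathrm{Bl}_{10}\mathbb{P}^2$ with $\mathcal{O}(1)=10L-3\sum_i E_i$, one reads off $\deg S=100-90=10$, $H_S\cdot K_S=0$ (so sectional genus $6$), $K_S^2=-1$, $e(S)=3+10=13$, and, by Riemann--Roch on the blow-up together with $h^1(\mathcal{I}_S(3))=0$ for general $S$, $h^0(\mathcal{I}_S(3))=56-46=10$. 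The assignment $(p_\bullet,g)\mapsto[\,g\cdot S_{p_\bullet}\,]$ gives a dominant rational map $U\times\mathrm{PGL}_6\dashrightarrow\mathcal{S}^{10,6}$ from a rational irreducible variety, whence $\mathcal{S}^{10,6}$ is unirational and irreducible. Since a general $S$ has trivial automorphism group and is embedded by a complete linear system, its $\mathrm{PGL}_6$-stabilizer is trivial; as the blow-down structure realizing $\mathcal{O}_S(1)$ is moreover unique up to $\mathrm{PGL}_3$ and finite ambiguity, the general fibre of the above map has dimension $\dim\mathrm{PGL}_3$, and $\dim\mathcal{S}^{10,6}=2\cdot 10+\dim\mathrm{PGL}_6-\dim\mathrm{PGL}_3=47$.

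For the second assertion, fix a general $S\in\mathcal{S}^{10,6}$ and a general smooth cubic fourfold $X\supset S$. In $H^{2,2}(X,\mathbb{Z})$ the classes $h^2$ and $[S]$ span a rank-two sublattice whose Gram matrix is
\[\begin{pmatrix} h^2\cdot h^2 & h^2\cdot S\\ S\cdot h^2 & S\cdot S\end{pmatrix}=\begin{pmatrix} 3 & 10\\ 10 & 46\end{pmatrix},\]
since $h^2\cdot S=\deg S=10$ and, using $c(N_{S/X})=c(T_X|_S)/c(T_S)$,
\[S\cdot S=\int_S c_2(N_{S/X})=6\deg S+3\,H_S\cdot K_S+K_S^2-e(S)=60+0-1-13=46.\]
This lattice has discriminant $3\cdot 46-10^2=38$, and $38\equiv 2\ (\mathrm{mod}\ 6)$. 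I would then verify, on a single explicit representative (conveniently over a finite field, by computer algebra, as in \cite{Nuer}), that this sublattice is saturated in $H^{2,2}(X,\mathbb{Z})$ and that for general such $X$ it is the whole of $H^{2,2}(X,\mathbb{Z})$; by Hassett's description of the Noether--Lefschetz divisors, the general $X$ containing a member of $\mathcal{S}^{10,6}$ then belongs to $\mathcal{C}_{38}$ and to no other $\mathcal{C}_d$.

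It remains to see that the locus of such cubic fourfolds is a divisor, for then, being irreducible, closed in the irreducible $\mathcal{C}_{38}$ and contained in it, it must equal $\mathcal{C}_{38}$. Consider $I=\{(S,X)\in\mathcal{S}^{10,6}\times\mathbb{P}(H^0(\mathcal{O}_{\mathbb{P}^5}(3)))\ :\ S\subset X\}$; over the locus where $h^0(\mathcal{I}_S(3))=10$ the first projection is a $\mathbb{P}^9$-bundle, so $\dim I=47+9=56$. The second projection $I\to\mathbb{P}^{55}$ is not dominant, since a general cubic fourfold has algebraic $H^{2,2}$ of rank one and hence contains no surface in the class $[S]$ (already $\deg S=10$ is not a multiple of $3$); therefore its image has dimension at most $54$. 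Conversely, the family of surfaces of $\mathcal{S}^{10,6}$ contained in a general $X$ has dimension at most $2$ --- as one checks on an explicit example via $h^0(N_{S/X})=2$, compatibly with $\chi(N_{S/X})=1$ --- so the general fibre of $I\to\mathbb{P}^{55}$ has dimension at most $2$, and the image has dimension at least $54$, hence exactly $54$. Modulo $\mathrm{PGL}_6$, which acts with finite stabilizers on smooth cubic fourfolds, the locus in $\mathcal{C}$ has dimension $54-35=19=\dim\mathcal{C}_{38}$, forcing equality. The main obstacles are the very ampleness claim in the first part and, above all, the verification in the second part that the rank-two lattice is saturated and equal to the full algebraic cohomology of the general member --- equivalently, that $I\to\mathbb{P}^{55}$ has fibres of the expected dimension $2$ --- both of which are most naturally settled by an explicit computation on one general representative, overlapping with \cite{Nuer}.
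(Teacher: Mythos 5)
The paper gives no proof of this statement---it is quoted directly from Nuer's work \cite{Nuer}---and your argument is a correct reconstruction of the standard proof given there: the numerical invariants ($\deg S=10$, $H_S\cdot K_S=0$, $K_S^2=-1$, $e(S)=13$, hence $\langle S,S\rangle=46$ and discriminant $38$), the dimension count $\dim I=47+9=56$ with $2$-dimensional fibres giving a $19$-dimensional image in $\mathcal C$, and the deferral of $h^0(N_{S/X})=2$ and very-ampleness to an explicit computation are all exactly the ingredients of the cited proof. No discrepancies to report.
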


 \begin{theorem}[\cite{RS1,RS3,procRS}]
 A general surface $[S]\in \mathcal S^{10,6}$ 
  admits a \emph{conguence of $5$-secant conics}: through the general point of $\PP^5$ there passes 
  a unique conic curves which is $5$-secant to $S$.

  The general conic curve of this congruence can be realized as the general 
 fiber of the dominant map 
  \[\mu:\PP^5\dashrightarrow \PP^4 \]
 defined by the linear system $|H^0(\mathcal I_{S}^{2}(5))|$ 
 of quintic hypersurfaces with double points along~$S$.
 
 If $X$ is a general cubic fourfold containing $S$, then the restriction 
of $\mu$ induces a birational map $\mu|_X:X\stackrel{\simeq}{\dashrightarrow}\PP^4$
 \end{theorem}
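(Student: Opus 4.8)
The plan is to argue exactly as in the quintic del Pezzo and $3$-nodal septic scroll cases, the only substantial point being the analysis of the linear system $|H^0(\mathcal I_S^2(5))|$. By the previous theorem the family $\mathcal S^{10,6}$ is irreducible, and every assertion to be proved — that $h^0(\mathcal I_S^2(5))=5$, that the induced map $\mu$ is dominant with general fibre an irreducible conic meeting $S$ in a length-$5$ scheme, and that $\mu|_X$ is birational for $X\supset S$ general — is an open condition on the pair $(S,X)$. Hence it suffices to exhibit one surface $S\in\mathcal S^{10,6}$, together with one smooth cubic fourfold containing it, for which all of this holds; semicontinuity then carries it to the general member. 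Concretely one takes $S$ to be the image of $\mathrm{Bl}_{p_1,\dots,p_{10}}\PP^2$ under $|10H-3\sum_i E_i|$ for an explicit choice of ten points (for instance over a finite field), for which, as in Nuer, $S$ is smooth of degree $10$ and sectional genus $6$, cut out by its $10$ independent cubics, and one moreover records $h^1(\mathcal I_S(5))=0$.

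First I would use the exact sequence $0\to\mathcal I_S^2(5)\to\mathcal I_S(5)\to N^\vee_{S/\PP^5}(5)\to 0$: then $h^0(\mathcal I_S(5))=h^0(\mathcal O_{\PP^5}(5))-h^0(\mathcal O_S(5))$, and the equality $h^0(\mathcal I_S^2(5))=5$ is equivalent to the restriction map $H^0(\mathcal I_S(5))\to H^0(N^\vee_{S/\PP^5}(5))$ having rank $h^0(\mathcal I_S(5))-5$ — a finite linear-algebra computation for the explicit $S$. Granting it, $\mu\colon\PP^5\dashrightarrow\PP^4$ is defined by five independent quintics singular along $S$; one checks on the example that its base scheme is supported on $S$, that $d\mu$ has rank $4$ at some point (so $\mu$ is dominant), and that the general fibre $F=\overline{\mu\inv(t)}$ is a smooth conic meeting $S$ in a reduced length-$5$ scheme. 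This fits the numerology: blowing up $S$, with exceptional divisor $E$ on $\pi\colon\widetilde{\PP^5}\to\PP^5$, one has $\widetilde\mu^{*}\mathcal O_{\PP^4}(1)=\pi^{*}\mathcal O_{\PP^5}(5)-2E$, so any contracted curve $\widetilde F$ satisfies $5\deg F=2\,(F\cdot S)$, matching $\deg F=2$, $F\cdot S=5$. Since $\mu$ restricts to a morphism on $\PP^5\setminus S$, its fibres partition a dense open subset of $\PP^5$, so through a general point there passes a unique conic of the congruence of $5$-secant conics thus produced.

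It remains to restrict to a general cubic fourfold $X\supset S$: such $X$ exist and are smooth because by the previous theorem they fill up the divisor $\mathcal C_{38}$, while $h^0(\mathcal I_S(3))=10$. For general $t\in\PP^4$ the fibre $C=C_t$ is a $5$-secant conic not contained in $X$ (containment in $X$ is a proper closed condition on $\PP^4$, since $\mu$ is dominant and $X\neq\PP^5$). Then $C\cap X$ is a length-$6$ subscheme of $C\cong\PP^1$ containing $C\cap S\subset S\subset X$, which has length $5$; hence $C\cap X=(C\cap S)\cup\{q_t\}$ for a single residual point $q_t\notin S$. This gives a rational map $\psi\colon\PP^4\dashrightarrow X$, $t\mapsto q_t$: since $q_t\in C_t$ and $q_t\notin S$ we get $\mu(q_t)=t$, while a general $x\in X$ lies on the single fibre $C_{\mu(x)}$ and is not on $S$, so $x$ is the residual point $q_{\mu(x)}=\psi(\mu(x))$. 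Thus $\mu|_X$ and $\psi$ are mutually inverse, and $\mu|_X\colon X\dashrightarrow\PP^4$ is birational.

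The main obstacle is the middle step: proving that $|H^0(\mathcal I_S^2(5))|$ has projective dimension exactly $4$ and that the resulting map contracts a family of irreducible conics that are $5$-secant to $S$. This is genuinely special to the geometry of the generalized Coble surface and is settled by an explicit (computer-assisted) verification on a single member of $\mathcal S^{10,6}$; once that is in hand, irreducibility of the family plus semicontinuity give the general member, and the passage to $X$ is the routine residuation argument above.
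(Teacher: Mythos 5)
The paper gives no proof of this statement: it is imported verbatim from \cite{RS1,RS3,procRS}, and your proposal is in substance a faithful reconstruction of the strategy of those references --- verify $h^0(\mathcal I_S^2(5))=5$ and the structure of the fibres of $\mu$ on one explicit member of $\mathcal S^{10,6}$ (a computer-assisted check over a finite field), propagate to the general member by irreducibility of the family and semicontinuity, and obtain birationality of $\mu|_X$ by the length-$6$ minus length-$5$ residuation on the $5$-secant conics. The only point worth making explicit is the uniqueness half of the congruence: you should note that your intersection computation $\widetilde F\cdot(\pi^{*}\mathcal O(5)-2E)=5\deg F-2\,\ell(F\cap S)$ also runs in the converse direction, i.e.\ \emph{every} $5$-secant conic is contracted by $\mu$ and hence coincides with the (irreducible, one-dimensional) fibre through a general point, which is exactly how uniqueness is obtained in \cite{RS1}.
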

 
\subsection{GM fourfolds of discriminant $10$}\label{disc10}

\subsubsection{$\tau$-quadric surfaces}\label{disc101} 
A \emph{$\tau$-quadric} surface is a two-dimensional linear section 
of a Schubert variety $\Sigma_{1,1}\simeq \mathbb{G}(1,3)\subset\mathbb{G}(1,4)$.
Thus the class of such a surface in $\mathbb{G}(1,4)$ is $\sigma_1^2\cdot \sigma_{1,1}=\sigma_{3,1}+\sigma_{2,2}$.
A standard parameter count (see \cite[Proposition~7.4]{DIM}, and also \cite{famGushelMukai})
shows that 
the closure  inside $\MM$ of the family 
of fourfolds containing  
a $\tau$-quadric surface 
forms the divisor $(\MM)_{10}'\subset\MM$, one 
of the two irreducible components of
the Noether-Lefschetz locus in $\MM$ parametrizing fourfolds 
of discriminant $10$.
In particular, since the family of $\tau$-quadric surfaces in $\mathbb{G}(1,4)$ is unirational,
we deduce that the divisor $(\MM)_{10}'$ is also unirational.
\begin{theorem}[\cite{DIM}; see also \cite{HoffSta}]
 The projection of a general fourfold $[X]\in(\MM)_{10}'$ 
 containing a $\tau$-quadric surface $S$, from the linear span $\langle S\rangle\simeq\PP^3$ of $S$,
 gives a birational map $X\dashrightarrow \PP^4$. 
\end{theorem}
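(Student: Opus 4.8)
The plan is to realise $\mu$ as the restriction to $X$ of the linear projection $\PP^8\dashrightarrow\PP^4$ from $\langle S\rangle\cong\PP^3$, to check that a single blow-up resolves it, and then to compute the degree of the resulting morphism by intersection theory. Throughout, $X=\YY\cap Q\subset\PP^8$ is the ordinary GM fourfold in question; equivalently $X$ is a $(1,2)$-complete intersection in $\mathbb G(1,4)$, so that $-K_X=2H$ ($H$ the Plücker class) and Schubert calculus is available. The $\tau$-quadric $S\subset X$ is a smooth quadric surface $S\cong\PP^1\times\PP^1$ embedded by $|H|_S|$, with $[S]=\sigma_{3,1}+\sigma_{2,2}$ in $\mathbb G(1,4)$.

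First I would pin down the base locus: $X\cap\langle S\rangle=S$ scheme-theoretically. Indeed $\YY$ is cut out by quadrics, whereas the homogeneous ideal of the smooth quadric surface $S$ inside $\langle S\rangle\cong\PP^3$ is generated by a single quadric; so the quadrics through $\YY$ restrict on $\langle S\rangle$ to scalar multiples of that quadric (they all vanish on $S$, and for general $X$ they cannot all vanish on $\langle S\rangle$), whence $\YY\cap\langle S\rangle=S$, and intersecting further with $Q\supset S$ changes nothing. Thus the base scheme on $X$ of the linear system cut by the hyperplanes through $\langle S\rangle$ is the reduced surface $S$, and $\mu$ resolves to a morphism $\widetilde\mu\colon\widetilde X:=\Bl_S X\to\PP^4$ with $\widetilde\mu^*\mathcal O_{\PP^4}(1)=\pi^*H-E$, where $\pi\colon\widetilde X\to X$ and $E=\pi\inv(S)$.

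Then $\deg\widetilde\mu=(\pi^*H-E)^4$, which is the crux. Expanding the fourth power and applying the standard intersection formulas for the blow-up of a smooth surface in a smooth fourfold, the $(\pi^*H)^3E$ and $(\pi^*H)E^3$ terms drop out (the first for dimension reasons, the second because $c_1(N_{S/X})=0$), leaving
\[(\pi^*H-E)^4=\deg X-6\deg S+c_2(N_{S/X})=10-12+c_2(N_{S/X}).\]
From $0\to T_S\to T_X|_S\to N_{S/X}\to0$ together with $-K_X=2H$ and $-K_S=2H|_S$ one gets $c_1(N_{S/X})=0$ and $c_2(N_{S/X})=\int_S c_2(T_X)-e(S)$. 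A short computation of $c(T_X)$ for the $(1,2)$-complete intersection $X\subset\mathbb G(1,4)$ gives $c_2(T_X)=(3\sigma_2+4\sigma_{1,1})|_X$, hence
\[\int_S c_2(T_X)=\int_{\mathbb G(1,4)}(\sigma_{3,1}+\sigma_{2,2})(3\sigma_2+4\sigma_{1,1})=7,\]
so that $c_2(N_{S/X})=7-4=3$. Therefore $(\pi^*H-E)^4=1$: the morphism $\widetilde\mu$ has degree one, i.e. it is birational, and hence so is $\mu$.

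The individual computations are routine; the step that genuinely uses the genericity of $[X]\in(\MM)_{10}'$ is the second one --- that $\langle S\rangle\not\subset\YY$, so that $X\cap\langle S\rangle$ is the reduced $S$ and a single blow-up resolves $\mu$ with no spurious fixed divisor --- which is what makes $\deg\widetilde\mu$ equal to $(\pi^*H-E)^4$ rather than something smaller. An alternative argument, closer to the ``congruence'' arguments used for the cubic divisors above, sidesteps computing on $\widetilde X$ altogether: a residual-intersection computation on $\Bl_S\YY$ shows that a general $\PP^4$ through $\langle S\rangle$ meets $\YY$ in $S\cup C$ with $C$ a line that meets $S$ in one point and is not contained in $Q$, so that
\[X\cap\PP^4=(\YY\cap Q)\cap\PP^4=(S\cup C)\cap Q=S\cup(C\cap Q)=S\cup\{\text{one point}\};\]
thus the general fibre of $\mu$ is a single point --- the GM incarnation of ``$2-1=1$'' in place of the ``$3-2=1$'' and ``$6-5=1$'' of the cubic cases.
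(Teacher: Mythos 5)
Your argument is correct, but note that the paper does not actually prove this statement: it is quoted verbatim from \cite{DIM} (see also \cite{HoffSta}) as one of the geometric inputs feeding into Remark \ref{properties}(2), so the only fair comparison is with the cited sources. There, the birationality is established essentially by the residual-intersection argument you sketch at the end: a general $\PP^4\supset\langle S\rangle$ cuts $\YY$ along $S$ plus a line $C$ meeting $S$ in one point, and intersecting with the quadric $Q$ leaves a single residual point, the ``$2-1=1$'' count. Your primary route --- resolving the projection by a single blow-up and computing $(\pi^*H-E)^4$ on $\Bl_S X$ --- is a genuinely different, more computational verification, and all the numbers check out: $c_2(T_{\mathbb{G}(1,4)})=11\sigma_2+12\sigma_{1,1}$ gives $c_2(T_X)=(3\sigma_2+4\sigma_{1,1})|_X$, hence $\int_S c_2(T_X)=7$, $c_2(N_{S/X})=3$, and $10-12+3=1$. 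You also correctly isolate the one geometric point that must be checked, namely $\langle S\rangle\not\subset\YY$ (immediate since $\mathbb{G}(1,4)\cap\langle\Sigma_{1,1}\rangle=\Sigma_{1,1}$ is a quadric fourfold in $\PP^5$, so a general $\PP^3$ meets it in a quadric surface, not a $\PP^3$), which guarantees that the base scheme is exactly $S$ and that no fixed divisor corrupts the degree count. What the Chern-class route buys is independence from the explicit description of the residual curve; what the enumerative route buys is transparency about where the fibre of $\mu|_X$ actually sits, which is what \cite{DIM} and \cite{HoffSta} need for the explicit inverse map used elsewhere in this paper.
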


\subsubsection{Quintic del Pezzo surfaces}\label{disc102}
A quintic del Pezzo surface 
can be realized as a two-dimensional linear section of $\mathbb{G}(1,4)$.
Thus the class of such a surface in $\mathbb{G}(1,4)$ is $\sigma_1^4 = 3\sigma_{3,1}+2\sigma_{2,2}$.
A standard parameter count (see \cite[Proposition~7.7]{DIM}, and also \cite{famGushelMukai})
shows that 
the closure  inside $\MM$ of the family 
of fourfolds containing  
a quintic del Pezzo surface 
forms the divisor $(\MM)_{10}^{''}\subset\MM$; one 
of the two irreducible components of 
the Noether-Lefschetz locus in $\mathcal M$
of fourfolds  of discriminant~$10$.
In particular, since the family of quintic del Pezzo surfaces in $\mathbb{G}(1,4)$ is unirational,
we deduce that the divisor $(\MM)_{10}^{''}$ is also unirational.
\begin{theorem}[\cite{Roth1949}; see also \cite{DIM,Enr,EinSh}] 
 The projection of a general fourfold $[X]\in(\MM)_{10}^{''}$ 
 containing a quintic del Pezzo surface $S$, from the linear span $\langle S\rangle\simeq\PP^5$ of $S$,
 induces a dominant map $X\dashrightarrow\PP^2$ 
 whose generic fiber is a quintic del Pezzo surface.
 (In particular, $X$ is rational. Indeed,
 from a classic result of Enriques,
  a quintic del Pezzo surface 
 defined over an infinite field $K$ is rational over $K$.)
\end{theorem}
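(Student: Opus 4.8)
The plan is to exhibit the projection as a fibration in quintic del Pezzo surfaces over $\PP^2$ and then to deduce rationality from the classical theorem of Enriques applied over the function field $k(\PP^2)$.

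Following Mukai's realization, write $X=\YY\cap Q$ with $\YY=\mathbb{G}(1,4)\cap\PP^8\subset\PP^8$ and $Q\subset\PP^8$ a quadric, and realize $S$ as the linear section $\mathbb{G}(1,4)\cap\PP^5$, so that $\langle S\rangle=\PP^5\subset\PP^8$ and $S\subset\YY\subset X$. First I would check that the indeterminacy locus of the linear projection $\pi:X\dashrightarrow\PP^2$ from $\langle S\rangle$ is exactly $S$: indeed $X\cap\langle S\rangle=(\mathbb{G}(1,4)\cap\PP^5)\cap Q=S\cap Q=S$, since $S\subset X\subset Q$. Hence $\pi$ is resolved by blowing up $S$, and for $p\in\PP^2$ the corresponding fibre of $\pi$ is $(X\cap\PP^6_{p})\setminus S$, where $\PP^6_{p}\supset\langle S\rangle$ is the six-plane through $\langle S\rangle$ determined by $p$.

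The core of the argument is the identification of $X\cap\PP^6_{p}$ for general $p$. Since $\PP^6_{p}\subset\PP^8$ one has $W_{p}:=\YY\cap\PP^6_{p}=\mathbb{G}(1,4)\cap\PP^6_{p}$, a three-dimensional linear section of $\mathbb{G}(1,4)$ of degree $5$; it is smooth for general $p$ (the strict transform of $W_{p}$ in $\Bl_{S}\YY$ is isomorphic to $W_{p}$, as $S$ is a Cartier divisor on the threefold $W_{p}$, while the induced morphism $\Bl_{S}\YY\to\PP^2$ has smooth general fibre), so $W_{p}$ is the quintic del Pezzo threefold, of Picard number one, with ample generator $H=\mathcal O_{W_{p}}(1)$. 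Now $S=W_{p}\cap\langle S\rangle\in|H|$, while $X\cap\PP^6_{p}=W_{p}\cap Q\in|2H|$ and contains the prime divisor $S$; since $\operatorname{Pic}(W_{p})=\mathbb Z\,H$, the residual divisor $S':=(W_{p}\cap Q)-S$ lies in $|H|$, hence is again a hyperplane section of $W_{p}$, i.e.\ a quintic del Pezzo surface, smooth and distinct from $S$ for general $p$. Therefore the fibre of $\pi$ over a general point of $\PP^2$ is $S'$ (blowing up the curve $S\cap S'$ inside the surface $S'$ changes nothing); in particular $\pi$ is dominant, since its fibres have dimension $2=\dim X-2$, and its generic fibre $\mathcal S_{\eta}$ is a quintic del Pezzo surface over the field $K:=k(\PP^2)$.

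To conclude, $K$ is an infinite field, so by Enriques' theorem (see \cite{Enr,EinSh}) the quintic del Pezzo surface $\mathcal S_{\eta}$ is $K$-rational; equivalently $k(X)=K(\mathcal S_{\eta})\cong K(t_{1},t_{2})=k(\PP^2)(t_{1},t_{2})$ is purely transcendental over $k$, whence $X$ is rational. The step I expect to need the most care is the identification of the residual surface $S'$: one must know that $W_{p}$ is a normal threefold of Picard number one --- so that an effective divisor in $|2H|$ through the prime divisor $S\in|H|$ necessarily splits as $S+S'$ with $S'\in|H|$ effective --- and that, for $X$ general, $S$ occurs in $W_{p}\cap Q$ with multiplicity one, so that $S'$ is a genuine quintic del Pezzo surface rather than $2S$; the latter cannot occur for general $p$ because the threefolds $W_{p}$ sweep out $X$. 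Everything else reduces to Bertini-type general position statements and divisor-class arithmetic on the quintic del Pezzo threefold.
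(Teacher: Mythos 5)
The paper does not actually prove this statement: it is quoted as a known classical result, with the proof delegated to the citations (Roth; Debarre--Iliev--Manivel; Enriques; Shepherd-Barron). Your argument is a correct reconstruction of exactly that classical proof: you realize $X=\YY\cap Q$, identify the general fibre of the projection from $\langle S\rangle$ as the residual surface $S'$ in $W_p\cap Q\in|2H|$ on the quintic del Pezzo threefold $W_p=\mathbb{G}(1,4)\cap\PP^6_p$ (using $\operatorname{Pic}(W_p)=\mathbb{Z}H$ to force $S'\in|H|$), and then apply Enriques' theorem to the generic fibre over $k(\PP^2)$. The points you flag as delicate --- smoothness of $W_p$ for general $p$, multiplicity one of $S$ in $W_p\cap Q$, and smoothness of the generic fibre --- are all handled correctly (generic smoothness in characteristic zero, and the sweeping argument ruling out $W_p\cap Q=2S$), so I see no gap; this is the same route the cited sources take.
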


\subsection{GM fourfolds of discriminant $20$}\label{42g}
Throughout  this subsection, we continue to let
 $\YY\subset\PP^8$ denote a fixed del Pezzo fivefold. 
\medskip 

Recall first two well-known ways to parametrize $\YY$ over its field of definition.
\begin{enumerate}
\item If $P\subset \YY$ is a plane in $\YY$ with class 
$\sigma_{2,2}$ in $\mathbb{G}(1,4)$, then the projection of $\YY$ from $P$ gives a birational map 
$\YY\dashrightarrow\PP^5$, whose inverse is defined by the linear system of quadrics 
through a rational normal cubic scroll $\Sigma_3\subset\PP^4\subset\PP^5$.
 \item If $C\subset \YY$ is a conic such that its linear span $P$ is not contained in $\YY$, then the projection of 
$\YY$ from $P$ gives a birational map $\YY\dashrightarrow\PP^5$,
whose inverse 
is defined 
by the linear system of cubics through a rational quartic scroll $\Theta_4 \subset\PP^5$,
obtained as a general projection of a rational normal threefold scroll in $\PP^6$.
\end{enumerate}
\medskip 

Now we breafly recall the construction due to \cite{RS3,HoffSta}
of a $25$-dimensional unirational family $\mathcal S^{9,2}\subset\mathrm{Hilb}_{\YY}$
of smooth rational surfaces of degree $9$ and genus $2$ 
having class $6\sigma_{3,1}+3\sigma_{2,2}$ in the Chow ring of $\mathbb{G}(1,4)$.
\medskip 

Let $T\subset\PP^6$ be the image of the plane 
via the linear system of quartic curves having $8$ general base points $p_1,\ldots,p_8$.
Thus $T$ is a smooth rational surface 
of degree $8$ and sectional genus $3$ 
cut out by $7$ quadrics. 
These $7$ quadrics
define a \emph{special} Cremona transformation 
\[
 \varphi:\PP^6\dashrightarrow\PP^6
\]
of type $(2,4)$, 
which has been classically studied in \cite{cite2-semple} (see also \cite{hulek-katz-schreyer}).
\medskip 

Let us recall a bit of geometry from the papers \cite{cite2-semple,hulek-katz-schreyer}.
The pencil of plane cubics through the $8$ 
base points $p_1,\ldots,p_8$
yields 
a pencil of elliptic normal quartic curves on $T$ 
passing through a special point $q\in T$, and
the union of the linear spans of these curves
gives a cone of vertex $q$ over a Segre threefold $\PP^1\times\PP^2\subset\PP^5$.

Let $H\simeq\PP^5\subset\PP^6$ be a general hyperplane in $\PP^6$.  
The restriction of $\varphi$ to $H$ gives 
a birational map 
\[\varphi|_H:\PP^5\dashrightarrow Z\subset\PP^6\]
onto a quartic hypersurface $Z\subset\PP^6$,
whose base locus, that is the intersection 
of $T$ with the hyperplane $H$, is an octic curve $C\subset\PP^5$
of arithmetic genus $3$ contained in a Segre threefold $\Sigma\simeq\PP^1\times\PP^2\subset\PP^5$.
The image  $\overline{\varphi(\Sigma)}$ 
is a smooth quadric surface $Q\subset Z\subset\PP^6$ (which is double as a component of the base locus of the inverse of $\varphi|_H$).
The pullback via the restriction $\varphi|_H$
of a line in one of the two pencils of lines on $Q$ is
a $\PP^2$ of the ruling of $\Sigma$, 
while the pullback of a general line in the other pencil of lines on $Q$
is  a smooth quintic del Pezzo surface containing $C$.
In particular, the curve $C$ is the base locus 
of a pencil  $\{D_{\lambda}\}_{\lambda}$ of quintic del Pezzo surfaces contained in $\Sigma$ and
whose general member is smooth.

Everything we have said about $C$ 
continues to hold true even if we take the hyperplane 
$H\subset \PP^6$ to be general among the hyperplanes containing a general tangent plane to $T$.
But in this case  (and only in this case), the curve $C$ 
has a node and it can be embedded in a rational quartic scroll 
$\Theta_4\subset\PP^5$ as the one considered above.
Indeed, such a nodal curve $C$ can be realized as a nodal 
projection of a smooth curve of degree $8$ and genus $2$ 
contained in a smooth rational normal quartic scroll threefold in $\PP^6$;
see \cite[Section~4]{RS3} for more details on this last step.
Then the birational map 
$\PP^5\dashrightarrow \YY\subset\PP^8$ 
defined by the linear system of cubics through $\Theta_4$ 
induces an isomorphism between a general quintic del Pezzo surface of the pencil $\{D_{\lambda}\}_{\lambda}$
with a smooth rational surface 
$S\subset \YY\subset\PP^8$ of degree $9$ and sectional genus $2$,
cut out by $19$ quadrics, and having class $6\sigma_{3,1}+3\sigma_{2,2}$ in
$\mathbb{G}(1,4)$.
\begin{theorem}[\cite{RS3}, see also \cite{HoffSta}]
The surfaces $S\subset \YY$ 
produced by the construction above
form an irreducible unirational 
family $\mathcal S^{9,2}\subset\mathrm{Hilb}_{\YY}$ of dimension $25$.

The closure of the family of quadric hypersurfaces in $\YY$ containing a surface of the family $\mathcal S^{9,2}$,
after passing to the quotient modulo $\mathrm{PGL}_9$,
describes the divisor $(\MM)_{20}\subset \MM$,
the irreducible component of 
the Noether-Lefschetz locus in $\MM$
of fourfolds  of discrimiant~$20$.
\end{theorem}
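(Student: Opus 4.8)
This is essentially \cite{RS3} (see also \cite{HoffSta}); we indicate the structure of the argument we would use. The two assertions — on the family $\mathcal S^{9,2}$ itself, and on the divisor it cuts out — are treated separately.

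For the first, irreducibility and unirationality are immediate from the construction preceding the statement: every $S\in\mathcal S^{9,2}$ is produced, through an explicit finite chain of birational maps, from a datum consisting of eight general points of $\PP^2$, a general point of the resulting octic surface $T\subset\PP^6$ (equivalently, a general tangent plane $\Pi$ to $T$), a general hyperplane $H\in|\mathcal I_{\Pi}(1)|\simeq\PP^3$, and a general member $D_\lambda$ ($\lambda\in\PP^1$) of the associated pencil of quintic del Pezzo surfaces; the passage from $D_\lambda\subset\PP^5$ to $S\subset\YY$ is realised by the linear system of cubics through the rational quartic scroll $\Theta_4$, and, all del Pezzo fivefolds being projectively equivalent, the target may be taken to be the fixed $\YY$. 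The parameter space of such data is irreducible and rational, hence so is $\mathcal S^{9,2}\subset\mathrm{Hilb}_{\YY}$ (which is by definition the closure of that image). For the dimension we would argue infinitesimally: exhibit one member $S\subset\YY$ (say over a finite field, following \cite{RS3}), check that it is smooth of degree $9$ and sectional genus $2$, cut out by $19$ quadrics in $\PP^8$, of class $6\sigma_{3,1}+3\sigma_{2,2}$ in $\mathbb{G}(1,4)$, and that $h^0(N_{S/\YY})=25$ with $\mathrm{Hilb}_{\YY}$ unobstructed at $[S]$; then $\mathrm{Hilb}_{\YY}$ is smooth of dimension $25$ along $\mathcal S^{9,2}$, so $\dim\mathcal S^{9,2}=25$. (One can alternatively count parameters directly along the construction, but then one must check that the chain of maps is generically finite onto $\mathcal S^{9,2}$ once one quotients by $\mathrm{PGL}_3$ and by the choices identifying the output del Pezzo fivefold with $\YY$.)

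For the second assertion, fix a general pair $(S,X)$ with $S\in\mathcal S^{9,2}$ and $X\supset S$ a smooth GM fourfold. From $[S]=6\sigma_{3,1}+3\sigma_{2,2}$ and the intersection theory of GM fourfolds (as in \cite{DIM,DK3}) one computes the Gram matrix on $H^{2,2}(X,\mathbb Z)$ of $\sigma_2|_X$, $\sigma_{1,1}|_X$ and $[S]$ — the only non-elementary entry being the self-intersection $[S]^2$, which follows from the normal bundle of $S$ in $X$ together with $S\simeq D_\lambda$ being a quintic del Pezzo surface — and reads off that the associated discriminant equals $20$; in particular $[S]$ is not a combination of $\sigma_2|_X$ and $\sigma_{1,1}|_X$, so $[X]\in(\MM)_{20}$. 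Since $20\equiv 4\ (\mathrm{mod}\ 8)$, the divisor $(\MM)_{20}$ is irreducible \cite{DIM,DK3}, so the closure $\mathcal D$ of the image in $\MM$ of the incidence variety $I=\{(S,X):S\in\mathcal S^{9,2},\ X\supset S\text{ a GM fourfold}\}$ is an irreducible subvariety of the $23$-dimensional $(\MM)_{20}$, and it suffices to check $\dim\mathcal D=23$. Now $\dim I=\dim\mathcal S^{9,2}+\bigl(h^0(\mathcal I_{S/\YY}(2))-1\bigr)=25+13=38$, the value $13$ arising because among the $19$ quadrics of $\PP^8$ cutting out $S$ exactly $5$ already vanish on $\YY$; one verifies on the explicit example that a general GM fourfold through a member of $\mathcal S^{9,2}$ contains only finitely many members of $\mathcal S^{9,2}$, so $I$ maps generically finitely onto its image $\mathcal V'\subset\PP(\mathcal O_{\YY}(2))$ and $\dim\mathcal V'=38$; passing to the quotient (effectively by $\mathrm{Aut}(\YY)$, of dimension $15$, consistently with $\dim\MM=39-15=24$) yields $\dim\mathcal D=38-15=23$, whence $\mathcal D=(\MM)_{20}$.

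The main obstacle is making the two dimension counts rigorous: one must exclude hidden collapsing in the rather elaborate construction of $\mathcal S^{9,2}$ (so that it fills out a $25$-dimensional family and not a smaller one), and one must know that a general discriminant-$20$ fourfold contains a surface of this family and only finitely many of them (so that $\mathcal D$ is all of $(\MM)_{20}$, not a proper subvariety). Both are most reliably settled by producing an explicit $S\subset\YY$ together with a random quadric section $X\supset S$ over a finite field and computing $h^0(N_{S/\YY})$, $h^0(\mathcal I_{S/\PP^8}(2))$, the class $[S]$, the discriminant of $X$, and the dimension of the family of such surfaces inside $X$, then transporting the conclusions to the general member in characteristic zero by semicontinuity; the purely Hodge-theoretic input — irreducibility of $(\MM)_{20}$ and its characterisation via the discriminant — is imported from \cite{DIM,DK3}.
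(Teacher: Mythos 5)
The paper does not prove this statement: it is quoted directly from \cite{RS3} and \cite{HoffSta} immediately after recalling the construction of $\mathcal S^{9,2}$, so there is no internal proof to compare against. Your outline is a faithful reconstruction of the strategy of those references and is numerically consistent throughout --- irreducibility and unirationality from the rational parameter space of the construction, $\dim=25$ certified by $h^0(N_{S/\YY})=25$ on an explicit (finite-field) example plus semicontinuity, the discriminant-$20$ computation from $[S]=6\sigma_{3,1}+3\sigma_{2,2}$ and $[S]^2$, the count $\dim I=25+(14-1)=38$ and $38-15=23=\dim(\MM)_{20}$, and the irreducibility of $(\MM)_{20}$ for $20\equiv 4\ (\mathrm{mod}\ 8)$ imported from \cite{DIM,DK3} --- so I see no gap.
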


\begin{remark}
An implementation of the construction of the family $\mathcal S^{9,2}$ is provided by
 the \emph{Macaulay2} package \emph{SpecialFanoFourfolds} \cite{macaulay2,SpecialFanoFourfoldsSource}.
 In particular, one is able to find explicit equations of a 
 general member of the family.
\end{remark}

\begin{remark}[\cite{famGushelMukai}]
Let $\Sigma_3\subset\PP^4\subset\PP^5$ be a rational cubic scroll surface,
and let $\psi:\PP^5\dashrightarrow \YY\subset \PP^8$
be the birational map defined by the quadrics through $\Sigma_3$.
Take $D\subset\PP^5$ to be a quintic del Pezzo surface intersecting $\Sigma_3$
along a hyperplane section of $\Sigma_3$. 
 Then the restriction of $\psi$ induces an isomorphism between 
 $D$ and a surface $S$ belonging to the family $\mathcal{S}^{9,2}$.
 However, the surfaces $S$ obtained by this ``\emph{simplified}'' construction 
 do not describe the whole family 
 $\mathcal{S}^{9,2}$.
\end{remark}

\begin{theorem}[\cite{HoffSta}]
 Let $S\subset \YY$ be a surface corresponding to a general member of the family $\mathcal S^{9,2}$.
 Then $S$ admits inside $\YY$ a \emph{congruence of $3$-secant conic curves},
 that is, through the general point of $\YY$ there passes 
 a unique conic which is $3$-secant to $S$ and is contained in $\YY$.

   The general conic curve of this congruence can be realized as the general 
 fiber of the dominant map 
  \[\mu:\YY\dashrightarrow \PP^4 \]
 defined by the linear system $|H^0(\mathcal I_{S,\YY}^{2}(5))|$ 
 of quintic hypersurfaces in $\YY$ with double points along~$S$.
 
 If $X$ is a general quadric hypersurface in $\YY$ containing $S$, then the restriction 
of $\mu$ induces a birational map $\mu|_X:X\stackrel{\simeq}{\dashrightarrow}\PP^4$.
\end{theorem}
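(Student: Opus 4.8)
The plan is to study the linear system $\mathcal L:=|H^0(\mathcal I_{S,\YY}^{2}(5))|$ and its associated rational map $\mu$, determine the structure of the general fibre of $\mu$, and then deduce the birationality of $\mu|_X$ by a residual‑intersection argument; the subtle point will be the \emph{degree one} property of the congruence. First I would pin down $\mathcal L$. As $S$ is smooth of codimension $3$ in the smooth fivefold $\YY$, the sheaf $\mathcal O_{\YY}/\mathcal I_{S,\YY}^{2}$ is supported on $S$, where as an $\mathcal O_S$‑module it is locally free of rank $4$; hence $\mathcal L$ is cut out inside $H^0(\mathcal O_{\YY}(5))$ by a number of linear conditions computable by Riemann--Roch from the known invariants of $S\in\mathcal S^{9,2}$ (rational, of degree $9$ and sectional genus $2$), once one knows that the relevant restriction map is surjective. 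Carrying this out for one explicit member of $\mathcal S^{9,2}$ --- for instance the one produced by the \emph{Macaulay2} package \emph{SpecialFanoFourfolds} \cite{macaulay2,SpecialFanoFourfoldsSource} --- and extending the resulting equality $h^0(\mathcal I_{S,\YY}^{2}(5))=5$ to the general member by upper‑semicontinuity of $h^0$, one obtains $\mu\colon\YY\dashrightarrow\PP^4$; dominance of $\mu$ and the one‑dimensionality of its general fibre then follow by exhibiting a single point of $\YY$ off the base locus where $d\mu$ is surjective, again checked on the explicit model.

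Next I would show that the general fibre $F_p=\mu^{-1}(p)$ is an irreducible conic that is $3$‑secant to $S$. The numerology already constrains this: for $F\in\mathcal L$ not containing $F_p$ one has $\deg(F|_{F_p})=5\deg F_p$, while $F$ is singular along $S\cap F_p$, and constancy of $\mu$ along $F_p$ forces all the divisors $F|_{F_p}$, $F\in\mathcal L$, to coincide; together with the numerical constraints inherited from the construction of $\mathcal S^{9,2}$, this is compatible only with $\deg F_p=2$ and $\operatorname{length}(S\cap F_p)=3$. To make this rigorous one may transport the question through the birational map $\psi\colon\PP^5\dashrightarrow\YY$ defined by the cubics through the quartic scroll $\Theta_4$, under which $S$ is the isomorphic image of a general quintic del Pezzo surface $D$ of the pencil $\{D_\lambda\}$: pulling $\mathcal L$ back to a linear subsystem of forms on $\PP^5$ with prescribed multiplicities along $D$ and $\Theta_4$, one recognises the induced map $\PP^5\dashrightarrow\PP^4$ and its fibres explicitly, and transports the conclusion forward. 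In any case $F_p\subset\YY$, so $F_p$ lies in the asserted family of $3$‑secant conics contained in $\YY$; the claim that the congruence has \emph{degree one} --- that a general point of $\YY$ lies on a \emph{single} such conic, equivalently that the general fibre of $\mu$ is one irreducible conic rather than a reducible or multiple curve --- is once more verified on the explicit member and extended by semicontinuity and the openness of the conditions involved.

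Granting the above, the birationality of $\mu|_X$ is formal. Write $X=\YY\cap Q$ for a general quadric hypersurface in $\YY$ containing $S$, and let $p\in\PP^4$ be general, so $F_p$ is an irreducible conic $3$‑secant to $S$, with $F_p\not\subset X$ (the conics of the congruence cover $\YY$, which is not contained in the hypersurface $X$). The map $\mu|_X$ is dominant, since $F_p\cap X=F_p\cap Q$ is a nonempty --- indeed length‑$4$ --- divisor on the conic $F_p$. Three of its points lie on $S\subset X$, which is the base locus of $\mu$; so, computed in the complement of $S$, the fibre of $\mu|_X$ over $p$ is the single residual point $q(p)$, the residual of the $k$‑rational degree‑$3$ divisor $F_p\cap S$ inside the $k$‑rational degree‑$4$ divisor $F_p\cap Q$, and $q(p)\in X\setminus S$ is defined over the field of definition of $(S,X)$. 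Hence $\mu|_X\colon X\stackrel{\simeq}{\dashrightarrow}\PP^4$ is generically one‑to‑one, i.e.\ birational, with rational inverse $p\mapsto q(p)$.

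The main obstacle is not this last formal step but the input it rests on: that $h^0(\mathcal I_{S,\YY}^{2}(5))$ equals \emph{exactly} $5$, that the general fibre of $\mu$ is \emph{exactly} one irreducible conic, and that this conic meets $S$ in length \emph{exactly} $3$. A parameter count only yields the \emph{expected} picture; excluding excess behaviour --- a larger $h^0$, extra fibre components, a different secancy --- genuinely requires either the explicit computation on a carefully built member of $\mathcal S^{9,2}$, supplemented by a semicontinuity argument ensuring that member is sufficiently general, or a careful accounting of the base conditions transported through $\psi$. Verifying that all the open conditions used along the way (irreducibility and smoothness of the general conic, $F_p\not\subset X$, surjectivity of $d\mu$ and of the restriction map defining $\mathcal L$) hold simultaneously at the general member is where the real work lies.
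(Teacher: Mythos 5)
The paper does not prove this statement at all: it is quoted verbatim from \cite{HoffSta}, and the existence of the congruence together with the exact values ($h^0(\mathcal I_{S,\YY}^2(5))=5$, irreducibility of the general fibre, $3$-secancy) is established there precisely as you propose, by explicit computation on a \emph{Macaulay2} model combined with semicontinuity, while the birationality of $\mu|_X$ follows from the same residual-point argument you give (a conic meets the quadric $Q$ in a length-$4$ divisor, three points of which lie on $S$). Your proposal is correct and matches the approach of the cited source, including an accurate identification of where the genuine work lies.
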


\begin{remark}\label{K320}
 The above theorem suggests an alternative construction 
 for the general surface $S\subset \YY$ of the family $\mathcal S^{9,2}$,
 which historically was the first to be discovered \cite{HoffSta}.
 Indeed, 
 the inverse map of $\mu|_X:X\stackrel{\simeq}{\dashrightarrow}\PP^4$ 
 is 
 defined by the linear system $|H^0(\mathcal I_{U,\PP^4}^2(9)|$
of hypersurfaces of degree $9$ singular 
along a surface $U$ which is obtained as an internal projection of a triple projection 
of a minimal K3 surface of degree $20$ and genus $11$ in $\PP^{11}$.
We can reverse this construction by starting with a general 
K3 surface of degree $20$ and genus $11$ in $\PP^{11}$.
\end{remark}

\subsection{Cubic fourfolds of discriminant $42$}\label{42c}
The 
 $25$-dimensional family $\mathcal S^{9,2}\subset\mathrm{Hilb}_{\YY}$ considered in the previous subsection and the 
 $3$-dimensional family of planes in $\YY$ with class $\sigma_{2,2}$
 can be combined together 
 to get a family of surfaces in $\PP^5$ of dimension $48=25+3-\dim \mathrm{Aut}(\YY) + \dim \mathrm{Aut}(\PP^5)$.
Indeed, 
let $S\subset \YY$ 
be a surface corresponding to a general member of the family $\mathcal S^{9,2}$,
and let $P\subset \YY$ be a general plane with class $\sigma_{2,2}$ in $\mathbb{G}(1,4)$.
Then the projection of $S$ from $P$ gives 
a rational surface $\widetilde{S}\subset \PP^5$ of degree $9$ and sectional genus $2$, cut 
out by $9$ cubics and having $5$ non-normal nodes.
Let us denote by $\widetilde{\mathcal{S}^{9,2}}\subset\mathrm{Hilb}_{\PP^5}$ the
(closure of the) family of surfaces $\widetilde{S}$ obtained as above. 
\begin{theorem}[\cite{RS3}]
The family $\widetilde{\mathcal S^{9,2}}\subset\mathrm{Hilb}_{\PP^5}$ 
is irreducible and unirational 
 of dimension~$48$.

The cubic fourfolds containing a surface of the family $\widetilde{\mathcal S^{9,2}}$ 
describe the divisor 
$\mathcal C_{42}\subset \mathcal C$ of fourfolds of discriminant $42$.
\end{theorem}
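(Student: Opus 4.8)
The plan is to prove the two assertions separately: that $\widetilde{\mathcal S^{9,2}}$ is irreducible, unirational of dimension $48$, and that the cubic fourfolds containing such a surface sweep out exactly the divisor $\mathcal C_{42}$.

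For the first assertion I would use the tautological parametrization. Recall from the construction above that $\mathcal S^{9,2}\subset\mathrm{Hilb}_\YY$ is irreducible and unirational of dimension $25$, and that the planes $P\subset\YY$ with class $\sigma_{2,2}$ form a $3$-dimensional unirational family $\mathbb F$. Then $\widetilde{\mathcal S^{9,2}}$ is the closure of the image of the rational map
\[
\Phi\colon\ \mathcal S^{9,2}\times\mathbb F\times\mathrm{PGL}_6\ \dashrightarrow\ \mathrm{Hilb}_{\PP^5},\qquad (S,P,g)\longmapsto g\cdot\overline{\pi_{P}(S)},
\]
where $\pi_{P}\colon\YY\dashrightarrow\PP^5$ is the projection from $\langle P\rangle\simeq\PP^2$. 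One checks that $\Phi$ is well defined and dominant onto $\widetilde{\mathcal S^{9,2}}$, and that for general $(S,P)$ the plane $\langle P\rangle$ is disjoint from $S$ but meets exactly $5$ secant lines of it, so that $\pi_P(S)$ is a rational $5$-nodal surface of degree $9$ and sectional genus $2$, cut out by $9$ cubics. As the source of $\Phi$ is irreducible and unirational, so is $\widetilde{\mathcal S^{9,2}}$. For the dimension I would compute the generic fibre of $\Phi$: a general $\widetilde S$ recovers the pair $(S,P)$ up to the action of $\mathrm{Aut}(\YY)$ — one reconstructs the abstract quintic del Pezzo normalizing $\widetilde S$ together with its re-embedding into $\YY$, hence $P$ — and then $g$ is pinned down up to the corresponding stabilizer, so the generic fibre has dimension $\dim\mathrm{Aut}(\YY)$. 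Since $\YY$ is the Grassmannian section attached to a skew form of rank $4$ on $\mathbb C^5$, whose stabilizer in $\mathrm{PGL}_5$ has dimension $24-9=15$, this gives $\dim\widetilde{\mathcal S^{9,2}}=25+3+35-15=48$.

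For the second assertion I would combine a parameter count with a discriminant computation. A general $\widetilde S\in\widetilde{\mathcal S^{9,2}}$ is cut out by $9$ cubics, so the cubic fourfolds containing it form a $\PP^8$, and the incidence variety $\mathcal I=\{(\widetilde S,X)\colon\widetilde S\in\widetilde{\mathcal S^{9,2}},\ \widetilde S\subset X\}$ is irreducible of dimension $48+8=56$, mapping to $\mathcal U\subset\PP^{55}$. Its image $D'$ is irreducible; since $[\widetilde S]$ is not a multiple of $h^2$, the generic member of $D'$ is a special cubic fourfold, hence $D'\subsetneq\mathcal U$ and $\dim D'\le 54$. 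The reverse inequality $\dim D'\ge 54$ is the step I expect to be the main obstacle: one must prove that a general cubic fourfold containing one surface of the family contains only a $2$-dimensional family of them, i.e.\ that $h^0(N_{\widetilde S/X})=2$ with vanishing obstructions for the singular surface $\widetilde S\subset X$. I would verify this by a direct deformation-theoretic computation on an explicit example, e.g.\ with the \emph{Macaulay2} package \cite{macaulay2,SpecialFanoFourfoldsSource}. Granting it, $D'$ is an irreducible divisor in $\mathcal U$ descending to an irreducible divisor $D\subset\mathcal C$.

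It then remains to compute the discriminant of $K=\langle h^2,[\widetilde S]\rangle\subset H^{2,2}(X,\mathbb Z)$ for general $X\in D'$. The normalization of $\widetilde S$ is the quintic del Pezzo surface $\Bl_4\PP^2$, with $K_S^2=5$ and $e(S)=7$, re-embedded with degree $9$ and sectional genus $2$; substituting into the self-intersection formula for a surface with $\delta=5$ non-normal nodes in a cubic fourfold,
\[
[\widetilde S]^2=6\cdot 9+3\,(2\cdot 2-2-9)+K_S^2-e(S)+2\delta=54-21+5-7+10=41,
\]
so that $\operatorname{disc}(K)=\det\!\left(\begin{smallmatrix}3&9\\9&41\end{smallmatrix}\right)=42$. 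Since $42$ is squarefree, $K$ has no proper finite-index overlattice and is therefore saturated, and for a very general $X\in D'$ one has $\operatorname{rk}H^{2,2}(X,\mathbb Z)=2$ (any extra algebraic class would confine $X$ to a countable union of proper subvarieties of $D'$). Hence such $X$ is special of discriminant $42$, so $D\subseteq\mathcal C_{42}$; as both are irreducible divisors — $\mathcal C_{42}$ by Hassett's theory — we conclude $D=\mathcal C_{42}$. Besides the deformation bound, the only other point I would double-check carefully is the value $\dim\mathrm{Aut}(\YY)=15$ used in the first part; everything else is routine parameter bookkeeping together with the standard self-intersection formula (obtained from $c_2$ of the normal bundle, corrected by $2$ per non-normal node).
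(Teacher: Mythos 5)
The paper does not prove this statement: it is imported verbatim from \cite{RS3} (see also \cite{famGushelMukai}), so there is no internal proof to compare against. Your reconstruction is essentially the argument of the cited source and I find no error in it: the dimension count $25+3+\dim\mathrm{Aut}(\PP^5)-\dim\mathrm{Aut}(\YY)=25+3+35-15=48$ is exactly the formula the paper records in Subsection~\ref{42c}, your value $\dim\mathrm{Aut}(\YY)=15$ is correct (stabilizer of a rank-$4$ skew form on $\mathbb{C}^5$), and the discriminant computation $[\widetilde S]^2=6\cdot 9+3(2g-2-\deg)+K^2-e+2\delta=41$, giving $\det\left(\begin{smallmatrix}3&9\\9&41\end{smallmatrix}\right)=42$, checks out against the analogous counts for $\C_{14}$ and $\C_{26}$. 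You have also correctly identified the two points that cannot be settled by pure bookkeeping: the lower bound $\dim D'\geq 54$ (equivalently, that a general $X\in D'$ contains only a $2$-dimensional family of such surfaces), and the genericity of the orbit-fibre in the first count; in \cite{RS3,famGushelMukai} these are indeed settled by semicontinuity from an explicit example computed with \emph{SpecialFanoFourfolds}, exactly as you propose. The only assertion you state without justification that deserves a word is that a general $\sigma_{2,2}$-plane $P$ meets exactly $5$ secant lines of $S$ (a general plane of $\PP^8$ meets none; here one uses that a secant line of $S$ meeting $P$ has three points on $\YY$, hence lies in $\YY$, and then counts lines of $\YY$ meeting both $P$ and $S$ twice) --- but this is part of the construction already recorded in the paper, not a gap in your argument.
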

\begin{theorem}[\cite{RS3}]
 Let $\widetilde{S}\subset \PP^5$ be a surface corresponding to a general member of the family $\widetilde{\mathcal S^{9,2}}$.
 Then $\widetilde{S}$ admits  a \emph{congruence of $8$-secant twisted cubic curves},
 that is, through the general point of $\PP^5$ there passes 
 a unique twisted cubic which is $8$-secant to~$\widetilde{S}$.

   The general cubic curve of this congruence can be realized as the general 
 fiber of the dominant map 
  \[\mu:\PP^5\dashrightarrow W \subset\PP^7\]
  onto a smooth del Pezzo fourfold $W=\mathbb{G}(1,4)\cap\PP^7$,
 defined by the linear system $|H^0(\mathcal I_{\widetilde{S},\PP^5}^{3}(8))|$ 
 of octic hypersurfaces with triple points along~$\widetilde{S}$.
 
 If $X$ is a general cubic fourfold containing $S$, then the restriction 
of $\mu$ induces a birational map $\mu|_X:X\stackrel{\simeq}{\dashrightarrow} W$.
\end{theorem}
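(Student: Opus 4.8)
\medskip
\noindent\textbf{Proof plan.}
I would split the statement in two. Part (A): the surface $\widetilde S\subset\PP^5$ really carries such a congruence; concretely, $h^0(\mathcal I_{\widetilde S,\PP^5}^{3}(8))=8$, the associated rational map $\mu\colon\PP^5\dashrightarrow\PP^7$ is dominant onto a fourfold $W$ with one-dimensional fibres, its general fibre is a twisted cubic meeting $\widetilde S$ in $8$ points, and $W$ is the del Pezzo fourfold $\mathbb G(1,4)\cap\PP^7$. Part (B): granting (A), the restriction $\mu|_X$ to a general cubic fourfold $X\supset\widetilde S$ is birational onto $W$. I would treat (B) first, as it is formal.

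For part (B): through a general point $w\in W$ there passes exactly one curve $T=T_w=\overline{\mu^{-1}(w)}$ of the congruence, because over the open set where $\mu$ is a morphism its fibres are disjoint. As the $T_w$ sweep out $\PP^5$ while $X$ is a proper subvariety, $T\not\subset X$, so by B\'ezout $T\cap X$ is a finite scheme of length $\deg T\cdot\deg X=3\cdot 3=9$. For general $w$ the twisted cubic $T$ avoids the five non-normal nodes of $\widetilde S$, meets $\widetilde S$ transversally, and is not tangent to $X$ along $\widetilde S$; hence $T\cap X$ consists of the $8$ reduced points of $T\cap\widetilde S$ together with a single residual reduced point off $\widetilde S$. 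Thus $\mu|_X\colon X\dashrightarrow W$ has generic fibre a single reduced point and is dominant (every general $w\in W$ is the image of the residual point of $T_w\cap X$); since $\dim X=4=\dim W$, it is birational. Because $|\mathcal I_{\widetilde S,\PP^5}^{3}(8)|$ is defined over the field of definition of $\widetilde S$, the maps $\mu$, $\mu|_X$ and the inverse of $\mu|_X$ are defined over the common field of definition of $S$ and $X$, as in the earlier theorems.

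For part (A): I would use the construction of $\widetilde S$ recalled in Subsections~\ref{42g}--\ref{42c}. Write $\widetilde S=\pi_P(S)$, with $S\subset\YY$ a general member of $\mathcal S^{9,2}$ and $\pi_P\colon\YY\dashrightarrow\PP^5$ the projection from a general plane $P\subset\YY$ of class $\sigma_{2,2}$, whose inverse is defined by the quadrics through a cubic scroll $\Sigma_3\subset\PP^4\subset\PP^5$; recall that $\widetilde S$ has degree $9$, sectional genus $2$, and exactly five non-normal nodes, the images of the $P$-secant lines of $S$. A naive parameter count of $8$-secant twisted cubics to a surface with these invariants does \emph{not} yield a congruence, so the special position of $\widetilde S$ inside the irreducible family $\widetilde{\mathcal S^{9,2}}$ must be used. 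My plan would be to transfer the problem to the smooth side: on the blow-up of $\YY$ along $S$, the map $\mu_\YY\colon\YY\dashrightarrow\PP^4$ of the Theorem of \cite{HoffSta} resolves into a morphism realising the congruence of $3$-secant conics, and — keeping track of the plane $P$ and of the five $\pi_P$-exceptional lines over the nodes — one transports the relevant intersection numbers back to $\PP^5$. This should give $h^0(\mathcal I_{\widetilde S}^{3}(8))=8$, show that the general fibre of $\mu$ is an irreducible twisted cubic meeting $\widetilde S$ in $8$ points, and exhibit $W=\overline{\mu(\PP^5)}$ as a smooth fourfold of degree $5$ with $-K_W=3H_W$; being a del Pezzo fourfold of degree $5$, it is then isomorphic to $\mathbb G(1,4)\cap\PP^7$ (Fujita, Iskovskikh). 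In practice all these assertions — $h^0=8$, fibre a twisted cubic, secancy $8$, $\deg W=5$, and the equations of $W$ — are checked by explicit computation on a general member of $\widetilde{\mathcal S^{9,2}}$ with the \emph{Macaulay2} package \emph{SpecialFanoFourfolds} \cite{SpecialFanoFourfoldsSource}, as in \cite{RS3}, and one concludes by semicontinuity.

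The hard part is (A): since $\widetilde S$ is singular and very non-generic, neither the existence nor the uniqueness of the $8$-secant twisted cubic through a general point follows from dimension counts, so the real work is the intersection-theoretic bookkeeping on a resolution of $|\mathcal I_{\widetilde S}^{3}(8)|$ (or its computer-algebra substitute). Part (B), by contrast, reduces to the numerical identity $\deg T\cdot\deg X-\#(T\cap\widetilde S)=3\cdot 3-8=1$, which is exactly what picks out cubic fourfolds — and hence the divisor $\C_{42}$ — among all hypersurfaces containing $\widetilde S$.
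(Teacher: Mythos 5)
The paper does not prove this statement: it is recalled verbatim from \cite{RS3} with no argument given. Your plan nevertheless matches the route taken in that source: part (B) is exactly the standard residual-point computation for an ``admissible'' congruence ($\deg T\cdot\deg X-\#(T\cap\widetilde S)=3\cdot 3-8=1$, so the fibre of $\mu|_X$ over a general $w\in W$ is the single point of $T_w\cap X$ off $\widetilde S$), and part (A) — the existence and uniqueness of the $8$-secant twisted cubic, $h^0(\mathcal I_{\widetilde S}^3(8))=8$, and the identification of $W$ with $\mathbb G(1,4)\cap\PP^7$ — is established in \cite{RS3} precisely by explicit \emph{Macaulay2} verification on one general member of the irreducible family $\widetilde{\mathcal S^{9,2}}$ followed by semicontinuity, as you propose. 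Your bookkeeping is correct, and you rightly flag that (A) is the genuinely hard part that cannot be obtained from a naive dimension count; the only point I would make explicit is that passing from one computed example to the general member also requires the uniqueness (not just existence) of the $8$-secant cubic through a general point to spread over the family, which is handled in \cite{RS1,RS3} by the structure theory of such congruences.
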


\begin{remark}
It follows from well-known classic results that a del Pezzo fourfold $W=\mathbb{G}(1,4)\cap\PP^7$ defined over an infinite field $K$ is rational over $K$.
\end{remark}

\section{Main Theorem}\label{mainthm} 
The goal of this Section is to prove the following result.

\begin{theorem}\label{newmain}
The moduli spaces of $n$-pointed fourfolds over the following moduli loci are unirational:

\begin{enumerate}
\item $\C_{14}$: cubic fourfolds containing a  quintic del Pezzo surface (Subsection~\ref{dp5});
\item $\C_{26}$: cubic fourfolds containing a 3-nodal septimic scroll (Subsection~\ref{7scroll});
\item $\C_{38}$: cubic fourfolds containing a generalized Coble surface (Subsection~\ref{coble});
\item  $\C_{42}$: cubic fourfolds containing a $5$-nodal
 rational surface of degree 9 and sectional genus 2 (Subsection~\ref{42c});
 \item $(\MM)_{10}'$: GM fourfolds containing a $\tau$-quadric surface (Subsection~\ref{disc101});
 \item $(\MM)_{10}^{''}$: GM fourfolds containing a quintic del Pezzo surface (Subsection~\ref{disc102});
\item $(\MM)_{20}$: GM fourfolds containing a smooth rational surface of degree 9 and sectional genus 2 (Subsection~\ref{42g});
\end{enumerate}
\end{theorem}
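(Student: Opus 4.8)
The plan is to treat the seven cases by a single argument that reduces the unirationality of each $n$-pointed moduli space to the unirationality of one auxiliary parameter space of pairs (surface, fourfold), using the explicit birational trivialisations collected in Section~\ref{famiglie di superfici}.

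Fix one of the seven cases and let $\mathcal{S}$ be the corresponding irreducible unirational family of surfaces, lying in $\mathrm{Hilb}_{\PP^5}$ in the cubic cases and in $\mathrm{Hilb}_{\YY}$ in the Gushel--Mukai cases. First I would form the parameter space $\mathcal{H}_{\mathcal{S}}$ of pairs $(S,X)$ with $[S]\in\mathcal{S}$ and $X$ a cubic fourfold (resp.\ a quadratic section of $\YY$) containing $S$, the universal fourfold $\mathcal{X}_{\mathcal{S}}=\{((S,X),p):p\in X\}\subset\mathcal{H}_{\mathcal{S}}\times\PP^5$ (resp.\ $\subset\mathcal{H}_{\mathcal{S}}\times\YY$), and its $n$-fold fibre power $\mathcal{X}_{\mathcal{S}}^{(n)}=\mathcal{X}_{\mathcal{S}}\times_{\mathcal{H}_{\mathcal{S}}}\cdots\times_{\mathcal{H}_{\mathcal{S}}}\mathcal{X}_{\mathcal{S}}$. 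On the dense open subset of $\mathcal{S}$ where $h^0(\mathcal{I}_{S}(3))$ (resp.\ $h^0(\mathcal{I}_{S,\YY}(2))$) is constant, the forgetful map $\mathcal{H}_{\mathcal{S}}\to\mathcal{S}$ is a Zariski-locally trivial projective bundle, being the projectivisation of a vector subbundle of a trivial bundle; since $\mathcal{S}$ is unirational, so is $\mathcal{H}_{\mathcal{S}}$. As a general fourfold of the divisor in question contains a surface of $\mathcal{S}$ (Section~\ref{famiglie di superfici}) and the marked points range freely over such a fourfold, the natural map $\mathcal{X}_{\mathcal{S}}^{(n)}\dashrightarrow\{(X,p_1,\dots,p_n):[X]\ \text{in the divisor}\}$ is dominant; composing with the quotient by the relevant $\mathrm{PGL}$-action, which does not affect unirationality, the $n$-pointed moduli space is dominated by $\mathcal{X}_{\mathcal{S}}^{(n)}$. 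Hence it suffices to show that $\mathcal{X}_{\mathcal{S}}^{(n)}$ is unirational.

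The core of the plan is a relative version of the birational maps of Section~\ref{famiglie di superfici}. In cases (1), (2), (3), (5) and (7), each quoted theorem gives, for a general pair $(S,X)$, a birational map $\mu|_{X}:X\stackrel{\simeq}{\dashrightarrow}\PP^4$ defined over the field of definition of $(S,X)$; in case (4) it gives $\mu|_{X}:X\stackrel{\simeq}{\dashrightarrow}W$ with $W=\mathbb{G}(1,4)\cap\PP^7$ a del Pezzo fourfold, which is rational over its (infinite) field of definition, so again $X$ is birational to $\PP^4$ over that field; and in case (6) it gives a del Pezzo fibration $X\dashrightarrow\PP^2$ whose generic fibre is a quintic del Pezzo surface over the relevant (infinite) function field, hence rational over it by Enriques' theorem, so that $X$ is birational to $\PP^4$ over its field of definition. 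I would then apply this to the generic pair $(S_{\eta},X_{\eta})$ over $k(\mathcal{H}_{\mathcal{S}})$, whose field of definition is precisely $k(\mathcal{H}_{\mathcal{S}})$; by the cited results $X_{\eta}$ is $k(\mathcal{H}_{\mathcal{S}})$-birational to $\PP^4_{k(\mathcal{H}_{\mathcal{S}})}$, and spreading out over a dense open of $\mathcal{H}_{\mathcal{S}}$ this gives an $\mathcal{H}_{\mathcal{S}}$-birational map
\[
\mathcal{X}_{\mathcal{S}}\ \dashrightarrow\ \mathcal{H}_{\mathcal{S}}\times\PP^4 .
\]
Equivalently, one checks directly that the explicit linear systems defining $\mu$ (quadrics through $S$, quintics with double points along $S$, octics with triple points along $\widetilde{S}$, and so on) have fibrewise-constant dimension over a dense open of $\mathcal{H}_{\mathcal{S}}$, so that their formation commutes with base change and they glue to a rational map over $\mathcal{H}_{\mathcal{S}}$ which is fibrewise birational.

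Taking $n$-fold fibre powers over $\mathcal{H}_{\mathcal{S}}$ of the displayed map yields an $\mathcal{H}_{\mathcal{S}}$-birational map
\[
\mathcal{X}_{\mathcal{S}}^{(n)}\ \dashrightarrow\ \bigl(\mathcal{H}_{\mathcal{S}}\times\PP^4\bigr)\times_{\mathcal{H}_{\mathcal{S}}}\cdots\times_{\mathcal{H}_{\mathcal{S}}}\bigl(\mathcal{H}_{\mathcal{S}}\times\PP^4\bigr)\ =\ \mathcal{H}_{\mathcal{S}}\times(\PP^4)^{n},
\]
and the target, being the product of the unirational variety $\mathcal{H}_{\mathcal{S}}$ with a rational variety, is unirational; hence $\mathcal{X}_{\mathcal{S}}^{(n)}$, and therefore the $n$-pointed moduli space over the given divisor, is unirational. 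Performing this for each of the seven loci proves Theorem~\ref{newmain}. The step I expect to be the main obstacle is the passage from the fibrewise birational maps of Section~\ref{famiglie di superfici} to a genuine $\mathcal{H}_{\mathcal{S}}$-birational trivialisation of $\mathcal{X}_{\mathcal{S}}$: one needs the rationality of a general fibre to be realised by a construction that is uniform in $(S,X)$, which is exactly what the emphasis on being ``defined over the field of definition of $S$ and $X$'' is designed to furnish via spreading out, together with the additional rationality inputs for the del Pezzo fourfold in case (4) and the quintic del Pezzo fibration in case (6).
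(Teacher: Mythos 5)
Your proposal is correct and follows essentially the same route as the paper: form the parameter space of pairs $(S,X)$ as a projective bundle over the unirational family $\mathcal{S}$, use hypothesis (2) of Remark~\ref{properties} to trivialise the universal fourfold birationally as a $\PP^4$-bundle in families (with the extra rationality inputs for the del Pezzo fourfold $W$ in case (4) and the quintic del Pezzo fibration in case (6), exactly as the paper's Section~2 remarks supply), and then pass to $n$ points and quotient by the automorphism group. The only cosmetic difference is that you trivialise the $n$-fold fibre power $\mathcal{X}_{\mathcal{S}}^{(n)}$ in one step as $\mathcal{H}_{\mathcal{S}}\times(\PP^4)^n$, whereas the paper builds the same object as an inductive tower of pullbacks, trivialising one $\PP^4$-factor at each stage; your explicit attention to the spreading-out step is a welcome clarification of a point the paper only asserts.
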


\begin{remark}\label{properties}
Recall that we denoted by $\mathcal U\subset H^0(\mathcal{O}_{\PP^5}(3))$ (respectively, $\mathcal V\subset H^0( \mathcal{O}_{\mathbb{Y}^5}(2))$) the open 
set parametrizing smooth cubic hypersurface in $\PP^5$ (respectively, smooth quadric hypersurfaces in $\YY$).
If $\mathcal S$ is a family of surfaces in the the Hilbert scheme $\mathrm{Hilb}_{\PP^5}$ of $\PP^5$ (respectively,
in the Hilbert scheme $\mathrm{Hilb}_{\YY}$ of  $\YY$),
then we denote by ${\mathcal X}_{\mathcal {S}}$ 
the closure inside $\mathcal U$ (respectively, inside $\mathcal V$) of the family of fourfolds  that contain some surface $[S]\in \mathcal S$,
and we let $\widetilde{{\mathcal X}_{\mathcal {S}}} = {\mathcal X}_{\mathcal {S}}//\mathrm{Aut}(\PP^5)\subseteq \mathcal C$ (respectively, $\widetilde{{\mathcal X}_{\mathcal {S}}} = \overline{{\mathcal X}_{\mathcal {S}}//\mathrm{Aut}(\YY)}\subseteq  \MM).$
We observe that the families of fourfolds described in Section~2 (and object of Theorem~\ref{newmain}) all share the following properties:
\begin{enumerate}
\item $\mathcal S$ is irreducible and unirational; so that 
the same holds true for 
the corresponding family $\mathcal X_{\mathcal S}$, and hence  for $\widetilde{\mathcal X_{\mathcal S}}$.
 \item\label{ipotesi vera} If $(S,X)$ is a couple where 
$S$ is a general member of the family $\mathcal S$ 
and $X$ is a general fourfold containing $S$,
then we are able to build,  starting from the pair $(S,X)$, 
an explicit birational map $\psi_{(S,X)}:\mathbb{P}^4\stackrel{\simeq}{\dashrightarrow} X$,
defined over the same field of definition as $S$ and $X$.
\end{enumerate}

\end{remark}

\newcommand{\XX}{\mathcal{X}}
\newcommand{\cS}{\mathcal{S}}
\newcommand{\cY}{\mathbb{Y}}

\begin{remark}\label{families}
The family $\XX_\cS$ carries the universal 1-pointed fourfold $\XX_{\cS, 1} \to \mathcal{X}_\cS$. And inductively one can define a tower of maps

$$\dots \to \mathcal{X}_{\cS, n} \to \mathcal{X}_{\cS, n-1}\to\dots \to\mathcal{X}_{\cS, 1}\to \mathcal{X_\cS}.$$
\end{remark}

By quotienting out by the automorphisms of $\PP^5$ or $\mathbb{Y}^5$, we can give straight away the following definition.

\begin{definition}\label{pointedmoduli}
We will denote by $\widetilde{\mathcal{X}_{\cS, n}}$ the moduli space of $n$-pointed (cubic or GM) fourfolds. It is the quotient by the respective group of automorphisms of the family $ \mathcal{X}_{\cS, n}$.
\end{definition}

\begin{remark}
The existence over an open subset of $\widetilde{\XX_\cS}$ of the moduli spaces in Def. \ref{pointedmoduli} is guaranteed by the fact that the very general cubic fourfold and GM fourfold has no nontrivial automorphisms (\cite[Theorem~3.8]{autom}, \cite[Proposition 3.21]{automord}). In fact, a family of cubic fourfolds with at least a non-trivial projective automorphism has dimension at most 14 \cite[Theorem~3.8]{autom}. Since we are working in the birational category, this will be enough for our goals.
\end{remark}

Of course there are forgetful maps

$$\widetilde{\mathcal{X}_{\cS, n}} \stackrel{\pi_n}{\to} \widetilde{\mathcal{X}_{\cS, n-1}} \stackrel{\pi_{n-1}}{\to}\widetilde{\mathcal{X}_{\cS, n-2}}\to \cdots ,$$
with evident meaning.

\begin{proof}[Proof of Theorem~\ref{newmain}]
Let us consider the unirational family $\mathcal{S}$ of surfaces inside $\PP^5$ (resp., $\YY$). 
Over the function field of $\mathcal{S}$, we can write the equations of the surface $S \in \mathcal{S}$, 
and notably describe their ideal. 
Call $m$ the dimension of  $H^0(\PP^5, \mathcal{I}_S(3))$
(resp.,  $H^0(\cY^5,\mathcal{I}_S(2))$). This is generically constant over $\mathcal{S}$. This means that the Hilbert scheme of couples $(S,X)$, 
where $X$ is a fourfold containing the surface $S\in \cS$, is birational to a $\PP^{m-1}-$bundle over $\mathcal{S}$, and hence unirational. Let us denote it by $P_{(S,X)}$. This sits naturally inside the product 
$\mathcal{S}\times \mathcal{X}$, 
 where $\mathcal{X}$ is the moduli space of cubic (resp., GM) fourfolds, and has two natural projections to the two components.

Now we observe that, by definition, over $P_{(S,X)}$ there is a natural \it double \rm universal family. That is: since $P_{(S,X)}$ parametrizes the couples $(S,X)$, then on one side we have the universal family $\mathcal{S}_1$ of 1-pointed surfaces - that is the universal family of surfaces over $\mathcal{S}$, pulled-back to $P_{(S,X)}$, on the other we have the universal family $\mathcal{X}_{\cS,1}$ 
of 1-pointed cubic (resp., GM) fourfolds with the forgetful map $\mathcal{X}_{\cS,1} \xrightarrow{\pi} P_{(S,X)}$. 
By definition there is a fiberwise inclusion: 
 \begin{equation*}
\xymatrix{ \mathcal{S}_1 \ar@{^{(}->}[rr] \ar[dr] & & \mathcal{X}_{\cS,1} \ar[dl] \\
 & P_{(S,X)} &  \\}
 \end{equation*}

Remark that different points of $P_{(S,X)}$ may parametrize the same GM fourfold but a different surface.

Now, by assumption $(2)$ of our working hypotheses at the beginning of this section, we have that - over $P_{(S,X)}$ - we can define a relative linear system on $\mathcal{X}_{\cS,1}$ (with base locus supported on $\mathcal{S}_1$) that defines a birational map from $\mathcal{X}_{\cS,1}$ to a $\PP^4$-bundle over $P_{(S,X)}$, that we denote $\PP^4_{P_{(S,X)}}$. Now $\PP^4_{P_{(S,X)}}$ is rational over $P_{(S,X)}$ and $P_{(S,X)}$ is unirational, hence the universal family $\mathcal{X}_{\cS,1}$ is unirational, since it is birational to $\PP^4_{P_{(S,X)}}$.

Exactly as one does for $\mathcal{X}_{\cS,1}$, we can construct a universal cubic (resp., GM) fourfold over $\mathcal{X}_{\cS,1}$, just by taking the pull-back $\pi^* \mathcal{X}_{\cS,1}$ over $\mathcal{X}_{\cS,1}$. We denote by $\mathcal{X}_{\cS,2}$ this family, and we observe that it tautologically contains $\pi^*\mathcal{S}_1$, as the following diagram follows.

$$\xymatrix{ \pi^*\mathcal{S}_1 \ar@{^{(}->}[r] \ar[d] & \mathcal{X}_{\cS,2}= \pi^* \mathcal{X}_{\cS,1} \ar[d] \\ 
\mathcal{S}_1 \ar@{^{(}->}[r] &  \mathcal{X}_{\cS,1} \\
}$$

 Thus $\mathcal{X}_{\cS,2}$ has the same property $(2)$ as $\mathcal{X}_{\cS,1}$ and one can define a relative linear system defining the birationality between $\mathcal{X}_{\cS,2}$ and a $\PP^4$-bundle over $\mathcal{X}_{\cS,1}$ - that we denote by $\PP^4_{\mathcal{X}_{\cS,1}}$. By the same argument as above, since $\mathcal{X}_{\cS,1}$ is unirational, $\mathcal{X}_{\cS,2}$ is unirational as well. Then, inductively, the same argument shows the unirationality of the universal families $\mathcal{X}_{\cS,n}$, for all $n$. 
 
Now the natural classifying maps, given by the quotient by the automorphisms groups, make the following diagram commutes.

 \begin{equation*}
\xymatrix{ \pi^*\mathcal{S}_1 \ar@{^{(}->}[r] \ar[dd] & \mathcal{X}_{\cS,2} = \pi^*\mathcal{X}_{\cS,1}\ar[dr]^\sim \ar[rr]^{//Aut}\ar[dd] & & \widetilde{\mathcal{X}_{\cS,2}} \ar[dd]  \\ 
& & \PP^4_{\mathcal{X}_{\mathcal{S},1}}\ar[dl] \ar[ru] &  \\
\mathcal{S}_1 \ar@{^{(}->}[r] \ar[ddr] \ar[dd]& \mathcal{X}_{\cS,1} \ar[dd]^\pi \ar[dr]^\sim \ar[rr]^{//Aut}& & \widetilde{\mathcal{X}_{\cS,1}} \ar[dd] \\
& & \PP^4_{P_{(S,X)}} \ar[dl] \ar[ru] & \\
\mathcal{S} & P_{(S,X)} \ar[l] \ar[rr]^{//Aut} & & \widetilde{\mathcal{X}_{\cS}}\\
} 
 \end{equation*}

It is then clear that also the corresponding moduli space $\widetilde{\XX_{\cS,n}}$, corresponding to the families $\XX_{\cS,n}$ are unirational.

\smallskip

To conclude the proof we observe that, thanks to the properties (1) and (2),
 we can plug any one of the seven loci mentioned in the claim inside this construction, and get the result.
\end{proof}

\begin{remark}
Several other special families of cubic (resp., GM) fourfolds verify the hypotheses required in this section. We nevertheless decided to concentrate on certain particular descriptions of codimension one loci.

In fact we did not only choose some codimension 1 loci, 
but we also chose a particular description of them. For example, for cubic fourfolds in $\C_{14}$ we could have chosen quartic scrolls as surfaces defining the divisor. We remark however that in that case our argument would not have worked 
since the quartic scroll defines a birational map to a 4-dimensional quadric, and a quadric bundle is not automatically rational over its base.
\end{remark}

\section{A rational Noether-Lefschetz divisor of genus 11 K3 surfaces, and their associated GM fourfolds.}\label{g11}

In this section we will consider a codimension 1 locus of $(\mathcal{M}^4_{GM})_{20}$, in order to show that, once we restrict the family of GM-fourfolds to this locus, stronger rationality statements hold.





As observed in Rem. \ref{K320} (see also \cite{HoffSta,RS3}), for the generic $X\in (\mathcal{M}^4_{GM})_{20}$ there exists a birational map $\xymatrix{\PP^4 \ar@{-->}[r] & X}$, defined  by the linear system of hypersurfaces of degree 9 having double points along a surface $U$, which is a projection of a genus 11 K3 surface.

\smallskip

More precisely, one starts from a K3 surface $Z\subset \PP^{11}$ of degree 20 and sectional genus 11. We take two points $p,q\in Z$, and perform first a triple projection from $p$ to $\PP^5$, then a simple projection off $q$ to $\PP^4$. The image is the required surface $U$.


With this in mind, one can prove the following result (that we have already proven), in a new fashion. We give a sketch of this different proof since it will be useful in the following.

\begin{theorem}\label{theo}
The universal family $(\mathcal{M}^4_{GM})_{20,1}$ of 1-pointed GM-fourfolds is unirational.
\end{theorem}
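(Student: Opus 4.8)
The plan is to run the same inductive bundle argument as in Theorem~\ref{newmain}, but starting from the family of K3 surfaces rather than the family $\mathcal{S}^{9,2}$, and to exploit the finer parametrization described in Remark~\ref{K320}. First I would let $\mathcal{K}$ denote the moduli space of polarized K3 surfaces of degree $20$ and genus $11$; this is a unirational (in fact rational) $19$-dimensional variety by classical results of Mukai on genus $11$ K3 surfaces, so in particular it is unirational, which is all we need. Over $\mathcal{K}$ we can write the equations of a general $Z\subset\PP^{11}$ over the function field of $\mathcal{K}$, and then form the parameter space of data $(Z,p,q)$ consisting of a K3 surface together with two marked points; this is birational to a $\PP^2\times\PP^2$-bundle (or rather a fibered square of universal surfaces) over $\mathcal{K}$, hence still unirational. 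Performing the triple projection from $p$ followed by the simple projection from $q$ produces, over this parameter space, the family of surfaces $U\subset\PP^4$, and then the degree-$9$ linear system $|H^0(\mathcal{I}_U^2(9))|$ with double points along $U$ recovers the GM fourfold $X$ together with a birational map $\PP^4\dashrightarrow X$, all defined over the same field.

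The key point, exactly as in the main theorem, is that this construction is \emph{relative}: over the unirational parameter space $\mathcal{P}$ of triples $(Z,p,q)$ we obtain simultaneously the family $\mathcal{X}_{\mathcal{P}}$ of GM fourfolds of discriminant $20$ together with a fiberwise birational map from a $\PP^4$-bundle $\PP^4_{\mathcal{P}}\to\mathcal{P}$ onto it. Hence $\mathcal{X}_{\mathcal{P}}$ is birational to $\PP^4_{\mathcal{P}}$, which is rational over the unirational base $\mathcal{P}$, so $\mathcal{X}_{\mathcal{P}}$ is unirational. Now I would identify this family with (a model birational to) the universal $1$-pointed fourfold: a point of $\PP^4_{\mathcal{P}}$ is the datum of a triple $(Z,p,q)$, the associated fourfold $X$, and a point of $\PP^4\cong X$; modding out by the automorphisms of $\PP^8$ (equivalently of $\YY$) and using that the general GM fourfold has no nontrivial automorphisms, the image dominates $(\mathcal{M}^4_{GM})_{20,1}$. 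Since a dominant image of a unirational variety is unirational, this gives the unirationality of $(\mathcal{M}^4_{GM})_{20,1}$.

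The step I expect to be the main obstacle — or at least the one requiring genuine care rather than formal nonsense — is verifying that the parameter space $\mathcal{P}$ of triples $(Z,p,q)$, equipped with the projection construction, actually \emph{dominates} the relevant locus: that is, that a general GM fourfold of discriminant $20$ together with a general point on it really does arise from some $(Z,p,q)$, and that the fibers of $\mathcal{P}\to(\mathcal{M}^4_{GM})_{20}$ have the expected dimension so that no collapsing forces us out of the family. This is essentially the content of Remark~\ref{K320} and of \cite{HoffSta,RS3}, where the construction is shown to be reversible, so I would cite those results for the reversibility and dominance, and only need to check the bookkeeping: the dimension count $19 + 2 + 2 = 23$ for $\mathcal{P}$ against $\dim(\mathcal{M}^4_{GM})_{20} = 23$, matching up the two fibrations. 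Granting that, the rest is the same diagram-chase as in the proof of Theorem~\ref{newmain}, and indeed the inductive tower argument there applies verbatim to give unirationality of $(\mathcal{M}^4_{GM})_{20,n}$ for all $n$, though here we only state it for $n=1$.
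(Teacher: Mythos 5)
Your overall strategy is the same as the paper's: perform the two projections in families over the parameter space of triples $(Z,p,q)$, obtain a $\PP^4$-bundle carrying the family of degree-$10$ surfaces $U$, and use the relative linear system of degree-$9$ hypersurfaces with double points along $U$ to dominate $(\mathcal{M}^4_{GM})_{20,1}$; the reversibility and dominance issues you flag are indeed delegated to \cite{HoffSta,RS3}, exactly as in the paper, and your dimension bookkeeping matches.

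There is, however, one genuine gap, and it sits precisely at the step you treat as routine: the unirationality of the parameter space $\mathcal{P}$ of triples $(Z,p,q)$. This space is (birational to) the moduli space $\mathcal{F}_{11,2}$ of $2$-pointed genus-$11$ K3 surfaces, i.e.\ the fibered square of the universal K3 surface over $\mathcal{F}_{11}$. Its fibers over $\mathcal{F}_{11}$ are products $Z\times Z$ of K3 surfaces, which are not rational (nor unirational in characteristic $0$), so unirationality of $\mathcal{F}_{11}$ does \emph{not} formally imply unirationality of $\mathcal{F}_{11,2}$: your parenthetical ``$\PP^2\times\PP^2$-bundle'' is not what this space is, and the ensuing ``hence still unirational'' does not follow. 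The unirationality of pointed and universal K3 families is exactly the nontrivial problem studied in \cite{Barros_2018,farkas2019unirationality,ma2019mukai}, quoted in the introduction. The paper closes this gap by invoking \cite[Theorem~0.1]{Barros_2018}, which gives the unirationality of $\mathcal{F}_{11,n}$ in the relevant range (in particular for $n=2$); with that citation substituted for your fibered-square argument, the rest of your proof goes through and coincides with the paper's.
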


\begin{proof}
The philosophy is to do the above rationality construction in families. We need then to consider the moduli space $\mathcal{F}_{11,2}$ of polarized K3 surface of genus 11 with two marked points. 
The moduli space $\mathcal{F}_{11,3}$ 
comes equipped with an embedding inside a $\PP^{11}$-bundle over $\mathcal{F}_{11,2}$ and with two sections $\delta_1,\delta_2: \mathcal{F}_{11,2}\to \mathcal{F}_{11,3}$. Performing a relative triple projection from the image of $\delta_1$ and a simple one from the image of $\delta_2$ we obtain a $\PP^4$-bundle $\PP(E)$ over $\mathcal{F}_{11,2}$ containing the family $U$ of degree 10 surfaces.

    $$ \xymatrix @!0 @R=5mm @C=4cm {\relax
   \mathcal{F}_{11,3} \subset \PP^{11}\ar@{-->}[r]  \ar[ddddddd]  &  U \subset  \PP(E) \ar[ddddddd] \\
  &\\
  &\\
    &\\
    &\\
    &\\
    &\\
    \mathcal{F}_{11,2} \ar@/^1pc/[uuuuuuu]^{\delta_1}\ar@/_1pc/[uuuuuuu]_{\delta_2} & \mathcal{F}_{11,2} }$$

Since $\mathcal{F}_{11,2}$ is unirational \cite[Theorem 0.1]{Barros_2018} , the projective bundle $\PP(E)$ is unirational. The relative linear system of degree 9 hypersurfaces, with multiplicity two along $U$ gives a rational dominant map between the $\PP^4$-bundle $\PP(E)$ and $(\mathcal{M}^4_{GM})_{20,1}$ 
\end{proof}

In \cite{HoffSta}, Hoff and Staglian\`o also consider a codimension one subfamily of genus 11 K3 surfaces, that forms a Noether-Lefschetz divisor inside $\mathcal{F}_{11}$. This divisor seems particularly interesting under our point of view, since the wealth of geometry going on here allows us to strenghten our rationality results concerning the corresponding universal families of GM fourfolds related to these K3 surfaces. But let us give a couple more details about these surfaces.

We start from a smooth Fano threefold $Y$ of type $X_{22}\subset \PP^{13}$. It is well known that the generic tangent hyperplane sections of $Y$ are one-nodal (a double point) K3 surfaces (see \cite{{mukai-biregularclassification}}, \cite{schr}). The projection off the node of such a K3 surface gives a K3 surface in $\PP^{11}$, of degree 20 and sectional genus 11, containing a further conic (the exceptional divisor over the node). In fact, such a construction gives a Noether-Lefschetz divisor inside the 19-dimensional moduli space of K3 surfaces of genus 11, 
and the intersection lattice of these surfaces contains a sublattice of type

$$
    \begin{pmatrix} 20 & 2 \\ 2 & -2  \end{pmatrix}.
$$ 

\medskip


Before studying the universal family of GM fourfolds obtained from these special K3 surfaces, we need to show some results on the birational geometry of their Noether-Lefschetz locus. We will denote by $\mathcal{V}^{nod}_n$ the moduli space of $n-$pointed one nodal K3 surfaces of  sectional genus 12, obtained by cutting a $X_{22}$-type 3fold with tangent hyperplanes as above. The generic element of $\Vn$ is represented by a vector
$(Y,p,H,q_1,\dots, q_n)$, where $Y$ is a Fano threefold of type $X_{22}$, $p$ is a point of $Y$, $H$ is a hyperplane tangent to $Y$ in $p$, and $q_1,\dots, q_n$ are $n$ points on the surface $S_H:=Y\cap H$.
We will also denote by $\mathcal{X}_{22}$ the (rational, see \cite{Mu}) moduli space of Fano threefolds of type $X_{22}$. All these Fano threefolds are rational and birational among them, we will need to fix one $\widetilde{X}_{22}\in \mathcal{X}_{22}$.

\begin{theorem}

The moduli space $\Vn$ is rational if $n\leq 9$.

\end{theorem}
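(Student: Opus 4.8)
The plan is to exhibit an explicit dominant rational map from a rational (or at least unirational, then improved to rational) parameter space onto $\Vn$, following the tower of fibrations $\Vn \to \Vtre[n-1] \to \cdots \to \Vdue[0]$ where $\mathcal{V}^{nod}_0$ parametrizes pairs $(Y,p,H)$. First I would fix a single $\widetilde{X}_{22}\in\mathcal X_{22}$; since all Fano threefolds of type $X_{22}$ are rational and birational to one another, and $\mathcal X_{22}$ itself is rational, it suffices (in the birational category) to work over $\widetilde X_{22}$ and then spread out over $\mathcal X_{22}$. So I would first analyze the fiber: over a fixed $Y=\widetilde X_{22}$, the data $(p,H)$ with $H$ tangent to $Y$ at $p$ is a bundle over $Y$ whose fiber over $p$ is the hyperplanes containing the embedded tangent space $\mathbb T_pY\subset\PP^{13}$, i.e. a $\PP^{13-4}=\PP^9$; hence the space of triples $(p,H)$ is a $\PP^9$-bundle over $Y$, which is rational because $Y$ is rational. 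This shows $\mathcal V^{nod}_0$ (with $Y$ varying) is rational, as it is birational to a projective bundle over the rational variety $\mathcal X_{22}$ — or at least unirational, then rational by the bundle structure.

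Next I would add the marked points one at a time. Given $(Y,p,H)$, the surface $S_H=Y\cap H$ is a one-nodal K3; choosing $q_1\in S_H$ and then $q_2,\dots,q_n\in S_H$ means forming the $n$-fold fiber product $S_H\times_{\mathcal V^{nod}_0}\cdots\times_{\mathcal V^{nod}_0} S_H$. So $\Vn$ is birational to the relative $n$-fold fiber power of the universal nodal K3 surface over $\mathcal V^{nod}_0$. The key point is then to prove that this relative fiber power is rational for $n\le 9$. Here I would use the structure of $S_H$: after projecting from the node one gets a genus $11$ K3 surface $Z\subset\PP^{11}$ in the Noether–Lefschetz divisor with lattice containing $\left(\begin{smallmatrix}20 & 2\\ 2 & -2\end{smallmatrix}\right)$, and these surfaces, being tangent hyperplane sections of $X_{22}$, carry a great deal of explicit geometry (e.g. the conic over the node, the web of quadrics/Mukai model). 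The cleanest route: a one-nodal K3 of this type is rational as a surface? No — K3 surfaces are not rational. Instead the relevant fact is that the \emph{total space} of the universal family is rational even though the fibers are not; the $n$-pointed space gains $2n$ dimensions and one must show these can be absorbed.

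Concretely, I would argue as in Theorem~\ref{theo}: realize the universal one-nodal K3 over $\mathcal V^{nod}_0$ inside a projective bundle over a rational base, and then use that a surface fibered in a way that admits enough sections, or that sits in a linear system cut on a rational ambient, has rational total space and rational relative fiber powers up to the bound coming from the dimension where the construction degenerates. The bound $n\le 9$ should emerge exactly as the number of points one can freely impose before the linear-systems argument (the hypersurfaces of some fixed degree singular along the growing set of marked points, or the Mukai-type construction of the K3 from its Fano threefold via the marked points) stops being surjective onto the expected projective space — compare the role of $8$ general base points in the quartic Cremona transformation of Subsection~\ref{42g} and the $8$--$10$ general points appearing in the Coble-type constructions of Section~\ref{famiglie di superfici}.

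The main obstacle I expect is precisely pinning down why the relative $n$-fold fiber power of the universal nodal K3 is \emph{rational} (not merely unirational) for $n$ up to $9$, and why it fails at $10$: this requires an explicit birational model of the universal nodal K3 over $\mathcal V^{nod}_0$ — presumably via the projection-from-the-node description identifying $S_H$ with a K3 in $\PP^{11}$ built from $(Y,p)$ — together with a count showing that one can solve for the coordinates of $q_1,\dots,q_9$ rationally in terms of free parameters. Everything else (rationality of $\mathcal X_{22}$, the projective bundle structures, spreading out over the base) is formal. I would therefore spend most of the effort constructing that explicit model and verifying the $n\le 9$ threshold, likely with a \emph{Macaulay2} computation using the \emph{SpecialFanoFourfolds} package as in the rest of the paper.
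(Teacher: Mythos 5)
There is a genuine gap. Your proposal sets the problem up correctly but stops exactly where the proof has to happen: you reduce the statement to showing that the relative $n$-fold fiber power of the universal one-nodal K3 surface over the space $\mathcal{V}^{nod}_0$ of triples $(Y,p,H)$ is rational, you rightly note that the fibers themselves are not rational, and you then defer the decisive step to an unspecified ``explicit birational model'' and a \emph{Macaulay2} computation. That step is the entire content of the theorem, and the decomposition you chose --- adding the marked points one at a time as points of the K3 fibers --- makes it genuinely hard: there is no general mechanism forcing the total space of a K3 fibration, let alone its relative fiber powers, to be rational, and nothing in your sketch supplies one. The threshold $n\le 9$ also never actually emerges from your setup; you only speculate about where it might come from.

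The paper's proof avoids all of this by reordering the fibration. Instead of projecting $\Vn\to\mathcal{V}^{nod}_0$ (forget the points, keep $H$), project the other way: send $(Y,p,H,q_1,\dots,q_n)$ to $(Y,p,q_1,\dots,q_n)$, forgetting $H$ and retaining the marked points regarded as points of $Y$ itself (legitimate, since $q_i\in S_H=Y\cap H\subset Y$). The base $\mathcal{X}_{22}\times\widetilde{X}_{22}^{\,n+1}$ is rational as a product of rational varieties, and the fiber over $(Y,p,q_1,\dots,q_n)$ is the linear system of hyperplanes of $\PP^{13}$ tangent to $Y$ at $p$ and passing through $q_1,\dots,q_n$: tangency at $p$ imposes four linear conditions on the dual $\PP^{13}$ and each $q_i$ one more, so the fiber is a $\PP^{9-n}$. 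Hence $\Vn$ is birational to a $\PP^{9-n}$-bundle over a rational base, which is rational exactly when $9-n\ge 0$; this is where the bound $n\le 9$ comes from. Your observation that the pairs $(p,H)$ form a $\PP^{9}$-bundle over $Y$ is precisely the $n=0$ instance of this and is correct; the missing idea is simply to keep the points on the base and the hyperplane in the fiber for every $n$, which turns the hard K3-fibration question into a trivial projective-bundle one.
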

     
\begin{proof}
Let us consider the rational map

\begin{eqnarray}
\varphi: \Vn &\to & \mathcal{X}_{22} \times \widetilde{X}_{22}^{n+1}\\
(Y,p,H,q_1,\dots ,q_n) & \mapsto & (Y,p,q_1,\dots, q_n).
\end{eqnarray}

Remark that $\mathcal{X}_{22}\times \widetilde{X}_{22}^{n+1}$ is rational (and of dimension $3n+ 9$) since it is the product of rational varieties. Then, the fiber of $\varphi$ over $(Y,p,q_1,\dots, q_n)$ is exactly the linear system of hyperplanes in $\PP^{13}$ that are tangent to $Y$ in $p$, and pass through $q_1,\dots,q_n$. This shows that $\Vn$ is birational to a $\PP^{9-n}-$projective bundle over $\mathcal{X}_{22}\times \widetilde{X}_{22}^{n+1}$, and hence is rational if $n\leq 9$.
\end{proof}     

We recall that the projection off the node sends birationally $\Vn$ onto a $(18+2n)$-dimensional NL locus inside $\mathcal{F}_{11,n}$. Let us denote by $(\mathcal{M}^4_{GM})_{20}^{nod}$, the moduli space of GM fourfolds obtained from the NL K3 surfaces described above, and by $(\mathcal{M}^4_{GM})_{20,1}^{nod}$ the universal family above, obtained by restricting the construction of Thm. \ref{theo}. The moduli space $(\mathcal{M}^4_{GM})_{20}^{nod}$ is of dimension 22, and is contained in $(\mathcal{M}^4_{GM})_{20}$.
     
\begin{corollary}
The universal family $(\mathcal{M}^4_{GM})_{20,1}^{nod}$ is rational. The moduli space $(\mathcal{M}^4_{GM})_{20}^{nod}$ is rational.
\end{corollary}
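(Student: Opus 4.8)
The plan is to combine the previous theorem, that $\Vn$ is rational for $n \le 9$, with the family version of the $\PP^4$-bundle construction from the proof of Theorem~\ref{theo}, now carried out over the \emph{nodal} locus. First I would record that the projection off the node gives a birational map from $\mathcal{V}^{nod}_n$ onto a Noether-Lefschetz locus $\mathcal{F}^{nod}_{11,n}$ inside $\mathcal{F}_{11,n}$, so that in particular $\mathcal{F}^{nod}_{11,2}$ and $\mathcal{F}^{nod}_{11,3}$ are rational (being birational to $\mathcal{V}^{nod}_2$ and $\mathcal{V}^{nod}_3$, which are rational since $2,3 \le 9$). This is the crucial input that replaces the mere \emph{unirationality} of $\mathcal{F}_{11,2}$ used in Theorem~\ref{theo}.

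Next I would restrict the whole diagram of the proof of Theorem~\ref{theo} to this sublocus. Over $\mathcal{F}^{nod}_{11,2}$ we still have the $\PP^{11}$-bundle containing $\mathcal{F}^{nod}_{11,3}$ together with its two sections $\delta_1,\delta_2$; performing the relative triple projection from $\delta_1$ followed by the relative simple projection from $\delta_2$ produces a $\PP^4$-bundle $\PP(E')$ over $\mathcal{F}^{nod}_{11,2}$ containing the relative family $U$ of degree $10$ surfaces. Since $\mathcal{F}^{nod}_{11,2}$ is now \emph{rational}, and a projective bundle over a rational base is rational, $\PP(E')$ is rational. The relative linear system of degree $9$ hypersurfaces with double points along $U$ then gives a birational map from $\PP(E')$ onto $(\mathcal{M}^4_{GM})_{20,1}^{nod}$, exactly as in Theorem~\ref{theo} but with ``rational'' in place of ``unirational''. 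This yields rationality of the universal family. To obtain rationality of the base $(\mathcal{M}^4_{GM})_{20}^{nod}$ itself, I would run the same construction one marked point lower: over $\mathcal{F}^{nod}_{11,1}$ (birational to $\mathcal{V}^{nod}_1$, hence rational) one builds in the same way a rational total space dominating $(\mathcal{M}^4_{GM})_{20}^{nod}$ birationally, or alternatively observe directly that $(\mathcal{M}^4_{GM})_{20}^{nod}$ is birational to a projective bundle over $\mathcal{F}^{nod}_{11,2}$ (or a suitable rational parameter space) via the construction of Remark~\ref{K320}.

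The main obstacle I expect is \emph{bookkeeping the genericity and the dimensions}, rather than any deep new idea: one must check that the triple-then-simple projection, the associated degree-$9$ linear system, and the resulting map to the space of GM fourfolds all behave well in families over the \emph{nodal} locus, i.e. that the generic nodal K3 and its generic pair (or triple) of marked points still satisfy the open conditions guaranteeing that these maps are defined and birational — this is where one invokes that the locus is a \emph{divisor} of the full moduli and that the constructions of Section~\ref{famiglie di superfici} and Remark~\ref{K320} were already known to work for the generic member of $(\MM)_{20}$. Granting that, the dimension count ($\dim \mathcal{V}^{nod}_2 = 3\cdot 2 + 9 - 2 = 13$ (hyperplanes tangent at $p$ through $q_1,q_2$ cut the $9$-parameter pencil down by $2$), plus $4$ for the $\PP^4$-fibre, plus $5$ for the quotient bookkeeping, against $\dim (\mathcal{M}^4_{GM})_{20,1}^{nod} = 22 + 1$) should match up and close the argument.
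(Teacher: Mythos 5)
Your proposal follows essentially the same route as the paper: restrict the fibrewise triple-plus-simple projection construction of Theorem~\ref{theo} to the nodal Noether--Lefschetz locus, replace the unirationality of $\mathcal{F}_{11,2}$ by the rationality of $\mathcal{V}^{nod}_2$ (equivalently of its birational image under projection off the node), and conclude that the resulting $\PP^4$-bundle, hence $(\mathcal{M}^4_{GM})_{20,1}^{nod}$, is rational. The one caveat is your first suggestion for the base: running the construction over $\mathcal{V}^{nod}_1$ cannot work, since both marked points are needed as centres of the triple and simple projections that produce the surface $U$, and moreover $\dim\mathcal{V}^{nod}_1=20<22=\dim(\mathcal{M}^4_{GM})_{20}^{nod}$; the paper instead simply observes that $(\mathcal{M}^4_{GM})_{20}^{nod}$ is by construction birational to $\mathcal{V}^{nod}_2$ itself (both of dimension $22$), which is in the spirit of your second alternative, and your closing dimension count ($\dim\mathcal{V}^{nod}_2=13$) should likewise read $2n+18=22$.
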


\begin{proof}
The moduli space $\Vtre$ of nodal, 3-pointed K3 surfaces can be embedded in a $\PP^{12}$-bundle, and endowed with two sections $\delta_1,\delta_2: \Vdue \to \Vtre$, over $\Vdue$. Since we are working in the birational category, we can even consider (at least an open subset of) $\Vdue$ as contained in $\mathcal{F}_{11,2}$. Now, we project fiberwise off the node, obtaining a family of NL K3 surfaces in a $\PP^{11}$-bundle, with two sections, over $\Vdue$. Then, as we did in Thm. \ref{theo}, we perform the two projections off the sections and we obtain a $\PP^4$-bundle over $\Vdue$, containing a family $\mathcal{T}$ of degree 10 surfaces. The moduli space $\Vdue$ is rational, hence the $\PP^4$-bundle is rational as well. Then, by applying the relative linear system of degree 9 hypersurfaces through $\mathcal{T}$ as in Theorem \ref{theo}, we obtain a rational family of GM fourfolds over $\Vdue$, hence $(\mathcal{M}^4_{GM})_{20,1}^{nod}$ is rational. By construction $(\mathcal{M}^4_{GM})_{20}^{nod}$ is birational to $\Vdue$ and hence rational. 
\end{proof}


\providecommand{\bysame}{\leavevmode\hbox to3em{\hrulefill}\thinspace}
\providecommand{\MR}{\relax\ifhmode\unskip\space\fi MR }
\providecommand{\MRhref}[2]{%
  \href{http://www.ams.org/mathscinet-getitem?mr=#1}{#2}
}
\providecommand{\href}[2]{#2}

\end{document}